\theoremstyle{definition}
\newtheorem{definition}{Definition}[section]
\newtheorem{theorem}{Theorem}[section]
\newtheorem{corollary}{Corollary}[theorem]
\newtheorem{lemma}[theorem]{Lemma}
\newtheorem{remark}[theorem]{Remark}
\newtheorem{proposition}[theorem]{Proposition}
\newcommand{\stirlingone}[2]{\genfrac{[}{]}{0pt}{}{#1}{#2}}
\DeclarePairedDelimiter\floor{\lfloor}{\rfloor}
\DeclareMathOperator{\sign}{sign}
\DeclareMathOperator{\Tr}{Tr}
\title{A convergent $1/n$-expansion for GSE and GOE}
\author{Yaroslav Naprienko}
\thanks{\href{mailto:ya.naprienko@gmail.com}{ya.naprienko@gmail.com}, Department of Mathematics, Higher School of Economics, Russian Federation}
\begin{document}

\maketitle
\begin{abstract}
   We generalize the results on the asymptotic expansion from Gaussian Unitary Ensembles case to all Gaussian Ensembles. We derive differential equations on densities and their moment generating functions for all Gaussian Ensembles. Also, we calculate explicit expressions for the moment generating functions for all Gaussian Ensembles.
\end{abstract}

\numberwithin{equation}{section}

\section{Main results}
The aim of this paper is to find convergent asymptotic expansion for Gaussian Symplectic and Orthogonal Ensembles analogous to Offer Kopelevitch's expansion for Gaussian Unitary Ensemble. We prove following theorems:
\begin{theorem}
    Let $g$ be an entire function of order $\rho_g \leq 2$ and type $\sigma_g$ with $\sigma_g<+\infty$ if $\rho_g=2$. Then for a GSE random matrix $X_n$ of order $n \ge n_{s}(\rho_g,\sigma_g)$ and the scaled mean density function $\hat{p}_s(x)$, one has the following convergent expansion:
    \begin{equation}
        \int_{\mathbb{R}} g(x)\hat{p}_{s}(x)dx = \frac{3}{4\pi}\sum_{j=0}^{\infty}\frac{1}{n^{2j}}\int_2^2 \biggl[T^j \biggl(\frac{1}{n}f'-xf\biggr)\biggr](t)\sqrt{4-t^2}dt,
    \end{equation}
    where $T:C_b^{\infty} \to C_b^{\infty}$ is a linear operator defined in Secton \ref{SectionAsymptotic} and $f$ is any solution of the differential equation
    \begin{equation}
        g(x) = -\frac{4}{n^2}f'''(x) + \left(x^2 - \frac{8n+2}{n}\right)f'(x).
    \end{equation}
\end{theorem}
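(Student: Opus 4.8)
The plan is to combine the third-order differential equation for the GSE mean density that is derived in Section~\ref{SectionAsymptotic} with an integration-by-parts identity transferring that equation from the density onto a primitive of the test function, and then to iterate. Write $L$ for the operator $Lf = -\tfrac{4}{n^2}f''' + \bigl(x^2 - \tfrac{8n+2}{n}\bigr)f'$ of the theorem and let $f$ be any solution of $g = Lf$. Since $g$ is entire of order $\le 2$ and finite type, so is every solution $f$, and for $n$ above a threshold depending only on $\sigma_g$ (and $\rho_g$) the growth of $f$ and its derivatives is dominated by the Gaussian decay of $\hat p_s$ — whose rate is of order $n$ — so that every integral and integration by parts below is legitimate. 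Because $g = Lf$ determines $f$ only up to the homogeneous solutions of $L$, whose net contribution to the right-hand side vanishes once the series is known to converge, the answer is independent of the choice of $f$.

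First I would establish the one-step exact identity. Starting from $\int_{\mathbb R} g\,\hat p_s = \int_{\mathbb R}(Lf)\,\hat p_s$ and integrating by parts three times to move $L$ onto the density gives
\begin{equation*}
  \int_{\mathbb R} g(x)\,\hat p_s(x)\,dx = \int_{\mathbb R} f(x)\,\bigl(L^{\dagger}\hat p_s\bigr)(x)\,dx ,
\end{equation*}
the boundary terms vanishing by the growth/decay competition above. The density equation of Section~\ref{SectionAsymptotic}, rearranged, expresses $L^{\dagger}\hat p_s$ as a term supported on $[-2,2]$ and built from $\sqrt{4-t^2}$, plus a term of size $1/n^2$. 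One further integration by parts on the leading term puts it in the form $\tfrac{3}{4\pi}\int_{-2}^{2}\bigl(\tfrac1n f'(t) - t f(t)\bigr)\sqrt{4-t^2}\,dt$, while the $1/n^2$ term is again of the shape $\tfrac1{n^2}\int_{\mathbb R} g_1\,\hat p_s$ for a new entire function $g_1$ obtained from $f$ by a fixed differential operator with polynomial coefficients. Setting $u := \tfrac1n f' - x f$ and $u_1 := \tfrac1n f_1' - x f_1$ with $g_1 = Lf_1$, the operator $T$ of Section~\ref{SectionAsymptotic} is exactly the map $u \mapsto u_1$; it is well defined on $C_b^{\infty}$ (independent of the intermediate choices of solutions) and bounded there.

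Iterating the one-step identity $N$ times yields
\begin{equation*}
  \int_{\mathbb R} g\,\hat p_s = \frac{3}{4\pi}\sum_{j=0}^{N-1}\frac{1}{n^{2j}}\int_{-2}^{2}\bigl[T^{j}u\bigr](t)\sqrt{4-t^2}\,dt \;+\; \frac{1}{n^{2N}}\int_{\mathbb R} g_N(x)\,\hat p_s(x)\,dx ,
\end{equation*}
so the theorem reduces to showing that the remainder $R_N := n^{-2N}\int_{\mathbb R} g_N\,\hat p_s$ tends to $0$. The decisive point is that the passage $g_j \mapsto g_{j+1}$ does not inflate order or type: differentiation, multiplication by $x$, and the Green's operator inverting $L$ all preserve the class of entire functions of order $\le 2$ and type $\le \sigma_g$ (up to a fixed factor), provided $n$ is large enough that the weighted-supremum norm adapted to the Gaussian weight of $\hat p_s$ is finite — this being exactly where $\sigma_g$ forces a lower bound on $n$, since the homogeneous solutions of $L$ are parabolic-cylinder functions of type $\sim n/4$. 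In that fixed Banach space the map $g_j \mapsto g_{j+1}$ (equivalently $T$) has operator norm at most some $C_0$ uniformly in $j$, and by enlarging $n_s$ we may assume $C_0 < n^2$; hence $|R_N| \le C\,(C_0/n^2)^N \to 0$. Taking $n_s(\rho_g,\sigma_g)$ to be the larger of the two thresholds and letting $N \to \infty$ gives the stated convergent expansion.

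The main obstacle is this last estimate: obtaining a bound on $T$ — equivalently on $g_j \mapsto g_{j+1}$ — that is uniform in $j$ and beats $n^2$, which hinges on analysing the Green's operator of $L$ against the Gaussian weight of $\hat p_s$ and is precisely what makes the threshold $n_s$ effective. The remaining ingredients — the density equation itself, the vanishing of boundary terms, and the well-definedness of $f$ and $T$ — are comparatively routine once this growth/decay bookkeeping is set up.
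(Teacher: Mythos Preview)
Your proposal misreads the structure of the GSE density equation and, as a result, sets up an iteration that is both unnecessary and not actually supported by the identity you invoke. The equation derived in Section~\ref{SectionDifferential} (Proposition~\ref{theoremGSEdiff} with $\sigma^2=1/n$) is
\[
\frac{4}{n^{2}}\hat p_{s}'''+\Bigl(\frac{8n+2}{n}-x^{2}\Bigr)\hat p_{s}'-2x\,\hat p_{s}
\;=\;-\frac{3}{n}\hat p_{u}'-3x\,\hat p_{u},
\]
so $L^{\dagger}\hat p_{s}$ is \emph{not} ``a term supported on $[-2,2]$ built from $\sqrt{4-t^{2}}$ plus a term of size $1/n^{2}$'': it is a linear combination of $\hat p_{u}$ and $\hat p_{u}'$, the GUE density and its derivative. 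Consequently your ``one-step identity'' does not produce a semicircle contribution together with a $1/n^{2}$ remainder against $\hat p_{s}$; after integrating $\int (Lf)\hat p_{s}$ by parts you land directly on
\[
\int_{\mathbb R} g(x)\,\hat p_{s}(x)\,dx
\;=\;\frac{3}{2}\int_{\mathbb R}\Bigl(\frac{1}{n}f'(x)-xf(x)\Bigr)\hat p_{u}(x)\,dx,
\]
an integral against the \emph{GUE} density. That is the whole point of the argument in the paper: one integration by parts converts the GSE problem into a GUE problem for the test function $\frac{1}{n}f'-xf$.

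From here no new iteration or operator bound is needed. The operator $T$ in the statement is the Haagerup--Thorbj{\o}rnsen operator for GUE (defined via the equation $(x^{2}-4)f'+3xf=g-\text{const}$, not via $L$), and the convergent expansion is obtained by applying Kopelevitch's Theorem~\ref{KopelevitchTheorem} verbatim to $\frac{1}{n}f'-xf$, which is entire of order $\le 2$ and finite type because $f$ is (by~\cite{HKR}). All of the work you outline in the last two paragraphs --- bounding a Green's operator for $L$, controlling $g_{j}\mapsto g_{j+1}$ uniformly in $j$, arranging $C_{0}<n^{2}$ --- is absorbed into the citation of Kopelevitch's result for GUE and does not have to be redone. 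Your description of $T$ as ``the map $u\mapsto u_{1}$'' coming from a GSE recursion is therefore also off: $T$ acts entirely on the GUE side.
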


\begin{theorem}
    Let $g$ be an entire function of order $\rho_g \leq 2$ and type $\sigma_g$ with $\sigma_g<+\infty$ if $\rho_g=2$. Then for a GOE random matrix $X_n$ of order $n \ge n_{o}(\rho_g,\sigma_g)$ and the scaled mean density function $\hat{p}_o(x)$, one has the following convergent expansion:
    \begin{equation}
        \int g(x)\hat{p}_{o}dx = \frac{3}{4\pi}\sum_{j=0}^{\infty}\frac{1}{n^{2j}}\int_2^2 \biggl[T^j \biggl(\frac{1}{n}f'-xf\biggr)\biggr](t)\sqrt{4-t^2}dt,
    \end{equation}
    where $T:C_b^{\infty} \to C_b^{\infty}$ is a linear operator defined in Secton \ref{SectionAsymptotic} and $f$ is any solution of the differential equation
    \begin{equation}
        g(x) = -\frac{4}{n^2}f'''(x) + \left(x^2 - \frac{4n-2}{n}\right)f'(x).
    \end{equation}
\end{theorem}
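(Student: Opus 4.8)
The plan is to carry over to GOE the reduction that (for GUE) goes back to Kopelevitch and that already powers Theorem~1.1: express $\int g\,\hat{p}_o$ in terms of the semicircle weight by using, first, the linear differential equation with polynomial coefficients satisfied by the scaled mean density $\hat{p}_o$ and, second, the auxiliary equation $g=-\tfrac{4}{n^2}f'''+(x^2-\tfrac{4n-2}{n})f'$ relating $g$ to an auxiliary function $f$; then identify the $1/n^2$-correction as a single application of the operator $T$ of Section~\ref{SectionAsymptotic} and sum the resulting Neumann series. The input from the earlier sections is the differential equation for $\hat{p}_o$ promised in the abstract, which I would obtain exactly as in the GSE case: write the GOE mean density as a fixed finite combination of Hermite functions $\varphi_{n},\varphi_{n\pm1}$, their derivatives, and the GOE correction term; the Hermite equation $\varphi_k''=(\tfrac{x^2}{4}-k-\tfrac12)\varphi_k$ together with a Christoffel--Darboux summation collapses the density to a function solving a linear ODE; after the scaling defining $\hat{p}_o$ this can be recast as the ``pairing'' identity: for $f$ in a suitable class,
\[
\Bigl\langle\hat{p}_o,\ -\tfrac{4}{n^2}f'''+\bigl(x^2-\tfrac{4n-2}{n}\bigr)f'\Bigr\rangle=\frac{3}{4\pi}\int_{-2}^{2}\Bigl(\tfrac{1}{n}f'(t)-t\,f(t)\Bigr)\sqrt{4-t^2}\,dt+\frac{1}{n^2}\Bigl\langle\hat{p}_o,\ -\tfrac{4}{n^2}(\mathcal{S}f)'''+\bigl(x^2-\tfrac{4n-2}{n}\bigr)(\mathcal{S}f)'\Bigr\rangle ,
\]
where $\langle\hat{p}_o,\cdot\rangle=\int_{\mathbb{R}}\hat{p}_o(x)\,\cdot\,dx$ and $\mathcal{S}$ is an explicit operator. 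The coefficient $\tfrac{4n-2}{n}$ is forced by the GOE density ODE; the GSE computation differs only in that it yields $\tfrac{8n+2}{n}$, which is the sole difference between Theorems~1.1 and~1.2.

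Next I would solve the auxiliary equation. With $h=f'$ it reads $-\tfrac{4}{n^2}h''+(x^2-\tfrac{4n-2}{n})h=g$; the homogeneous equation is of parabolic-cylinder type with a pair of solutions behaving like $e^{\pm nx^2/4}$, and variation of parameters yields a particular solution with $|h(x)|\lesssim|x|^{-1}e^{\sigma_g x^2}$ (with the obvious modification if $\rho_g<2$) --- but only if the runaway mode $e^{nx^2/4}$ can be absorbed into the tail of $g$, which requires $\sigma_g<n/4$. This is exactly the source of the threshold $n\ge n_o(\rho_g,\sigma_g)$; for such $n$ the antiderivative $f=\int h$ lies in the class to which $T$ and the pairing identity apply, $\hat{p}_o$ (Gaussian-type decay) can be paired with $f$ and its derivatives with vanishing boundary terms, and, since changing $f$ by a solution of the homogeneous equation changes neither side, the final formula does not depend on the choice of $f$.

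Now assemble. By the choice of $f$ the left-hand side of the displayed identity is $\int g\,\hat{p}_o\,dx$, and since $\mathcal{S}$ induces on the functions $\psi_f:=\tfrac{1}{n}f'-t f$ exactly the operator $T$ of Section~\ref{SectionAsymptotic} (that is, $\psi_{\mathcal{S}f}=T\psi_f$), iterating the identity $J$ times gives
\[
\int g\,\hat{p}_o\,dx=\frac{3}{4\pi}\sum_{j=0}^{J}\frac{1}{n^{2j}}\int_{-2}^{2}\bigl[T^{j}\psi_f\bigr](t)\sqrt{4-t^2}\,dt+\frac{1}{n^{2(J+1)}}\Bigl\langle\hat{p}_o,\ -\tfrac{4}{n^2}(\mathcal{S}^{J+1}f)'''+\bigl(x^2-\tfrac{4n-2}{n}\bigr)(\mathcal{S}^{J+1}f)'\Bigr\rangle .
\]
Using the norm bound for $T$ from Section~\ref{SectionAsymptotic}, say $\|T^{j}\psi\|\le C^{j}\|\psi\|$ in the relevant norm, together with a matching estimate $C^{J+1}\|\psi_f\|$ for the last pairing, the remainder tends to $0$ as $J\to\infty$ once $n^2>C$, i.e. for $n\ge n_o(\rho_g,\sigma_g)$ after enlarging the threshold if needed. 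Hence the series converges with sum $\int g\,\hat{p}_o\,dx$, which is Theorem~1.2; the same argument with $\tfrac{8n+2}{n}$ in place of $\tfrac{4n-2}{n}$, fed by the GSE density ODE, yields Theorem~1.1.

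I expect the main obstacle to be the very first step of the recursion: proving that after substituting the auxiliary equation, integrating by parts against $\hat{p}_o$, and invoking the density ODE, the error term closes up as exactly one application of a single operator $T$ that is, moreover, the same for all Gaussian ensembles and has uniformly bounded norm --- this is what turns a formal $1/n$-expansion into a convergent one and is the technical core shared by all three ensembles. A secondary but genuine difficulty is bounding the auxiliary solution $f$ in terms of the order and type of $g$: it is this estimate that forces $n\ge n_o(\rho_g,\sigma_g)$, and the hypothesis $\sigma_g<\infty$ when $\rho_g=2$ is needed precisely so that the mode $e^{nx^2/4}$ can be suppressed when building $f$.
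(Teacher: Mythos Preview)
Your pairing identity is not what the GOE density ODE actually gives, and this is where the proposal goes off track. The differential equation derived in Section~\ref{SectionDifferential} is \emph{inhomogeneous}, with the GUE density on the right:
\[
\frac{4}{n^{2}}\hat{p}_o'''+\Bigl(\frac{4n-2}{n}-x^{2}\Bigr)\hat{p}_o'-2x\,\hat{p}_o=-\frac{3}{n}\hat{p}_u'-3x\,\hat{p}_u .
\]
Multiplying by $f$ and integrating by parts therefore yields
\[
\int g(x)\,\hat{p}_o(x)\,dx=\frac{3}{2}\int\Bigl(\frac{1}{n}f'(x)-x f(x)\Bigr)\hat{p}_u(x)\,dx,
\]
with $\hat{p}_u$, not $\hat{p}_o$, on the right and with no semicircle term appearing at this stage. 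There is no self-referential recursion in $\hat{p}_o$ to iterate; instead the problem is reduced in a single step to a GUE integral of the test function $\psi_f=\tfrac{1}{n}f'-xf$. The paper then simply applies Kopelevitch's Theorem~\ref{KopelevitchTheorem} to $\psi_f$, after checking (via \cite{HKR} and the stability of order and type under differentiation and polynomial multiplication) that $\psi_f$ is again entire of order $\le 2$ and finite type. This immediately gives the convergent expansion with coefficient $\tfrac{3}{2}\cdot\tfrac{1}{2\pi}=\tfrac{3}{4\pi}$.

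Consequently, the apparatus you set up---an operator $\mathcal{S}$ on the $f$-side, the compatibility $\psi_{\mathcal{S}f}=T\psi_f$, and norm bounds $\|T^{j}\psi\|\le C^{j}\|\psi\|$ to kill the remainder---is not needed and is not what Section~\ref{SectionAsymptotic} provides. No operator-norm estimate on $T$ is proved there; the convergence of the $1/n^{2}$ series is inherited wholesale from Kopelevitch's GUE result. Your analysis of the auxiliary equation and the role of the threshold $n\ge n_o(\rho_g,\sigma_g)$ is reasonable in spirit, but in the paper the threshold enters only through (i) vanishing of boundary terms when integrating by parts against the Gaussian-type tails of $\hat{p}_o$, and (ii) the hypothesis of Theorem~\ref{KopelevitchTheorem} applied to $\psi_f$.
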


Also, we find differential equations on densities of Gaussian Symplectic and Orthogonal Ensembles. 
\begin{proposition}
The density of GSE (\ref{GSEdensity}) satisfies the differential equation
\begin{multline}
    4\sigma^4 p_{GSE,n,\sigma^2}'''(x) + (\sigma^2(8n+2)- x^2)p_{GSE,n,\sigma^2}'(x) - 2x p_{GSE,n,\sigma^2}(x) = \\
    =-3\sigma^2 p_{GUE,n,\sigma^2}'-3x p_{GUE,n,\sigma^2}.
\end{multline}
\end{proposition}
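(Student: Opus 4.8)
\emph{Proof strategy.} The plan is to reduce the identity to a finite list of relations among scaled Hermite functions. Let $\phi_k=\phi_k^{(\sigma)}$ denote the $L^2(\mathbb{R})$-orthonormal functions built from the Hermite polynomials and the weight $e^{-x^2/(2\sigma^2)}$; these are the building blocks of the explicit density (\ref{GSEdensity}) and of $p_{GUE,n,\sigma^2}=\tfrac1n\sum_{k=0}^{n-1}\phi_k^2$. With $a_k:=\sigma\sqrt{k+1}$ they satisfy the three standard relations
\begin{equation*}
x\phi_k=a_k\phi_{k+1}+a_{k-1}\phi_{k-1},\qquad 2\sigma^2\phi_k'=a_{k-1}\phi_{k-1}-a_k\phi_{k+1},\qquad 4\sigma^4\phi_k''=\bigl(x^2-2\sigma^2(2k+1)\bigr)\phi_k.
\end{equation*}
Abbreviate $L:=4\sigma^4\partial_x^3+\bigl(\sigma^2(8n+2)-x^2\bigr)\partial_x-2x$ for the operator on the left-hand side and $R:=-3\bigl(\sigma^2\partial_x+x\bigr)$ for the one acting on $p_{GUE,n,\sigma^2}$ on the right. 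Adding the first two relations gives $2\sigma^2\phi_k'+x\phi_k=2a_{k-1}\phi_{k-1}$, so $R[p_{GUE,n,\sigma^2}]=-\tfrac3n\sum_{k}\phi_k\bigl(2\sigma^2\phi_k'+x\phi_k\bigr)=-\tfrac{6\sigma}{n}\sum_{k=1}^{n-1}\sqrt{k}\,\phi_k\phi_{k-1}$ is a sum of nearest-neighbour products.

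\emph{Step 1: a product identity.} For $\psi:=\phi_i\phi_j$, differentiating twice and eliminating $\phi_i'',\phi_j''$ with the third relation gives $4\sigma^4\psi''=\bigl(2x^2-2\sigma^2(2i+2j+2)\bigr)\psi+8\sigma^4\phi_i'\phi_j'$; differentiating once more and re-substituting then expresses $L[\psi]$ as an explicit combination of $\psi$, $\psi'$, $x\psi$, $x^2\psi'$ and the antisymmetric term $\phi_i'\phi_j-\phi_i\phi_j'$, with coefficients affine in $i$, $j$ and $n$. The value $8n+2$ is dictated by the requirement that after summing over the index set of (\ref{GSEdensity}) all of these pieces reduce, modulo telescoping, to $R[p_{GUE,n,\sigma^2}]$. (Taking $i=j$ and the GUE operator in place of $L$ reproduces the homogeneous third-order equation satisfied by $p_{GUE,n,\sigma^2}$.)

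\emph{Step 2: summation and telescoping.} Substitute (\ref{GSEdensity}) --- a fixed finite combination of products $\phi_i\phi_j$, together in general with terms $\phi_i\,\Phi_j$ in which $\Phi_j$ is an antiderivative of $\phi_j$ and which become genuine products once $L$, having no zeroth-order part, is applied --- into $L[p_{GSE,n,\sigma^2}]$, and insert Step 1. Rewriting every $x$ and $x^2$ by the recurrence and every $\phi_k'$ by the first-order relation, the sums telescope (an Abel summation handles the factors affine in the index) and collapse to a bounded number of boundary products built from the $\phi$'s at the cut-off index. The same manipulation applied to $R[p_{GUE,n,\sigma^2}]$ collapses it to a boundary expression of the same shape.

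\emph{Step 3: matching the boundary terms.} It remains to check that the boundary expression produced on the GSE side coincides with the one produced on the GUE side. This is the only place where the precise form of (\ref{GSEdensity}) (and of the representation of $p_{GUE,n,\sigma^2}$) is genuinely used: one must reconcile the index ranges of the two representations and apply a few elementary contiguity identities for Hermite functions near the cut-off. I expect this final reconciliation to be the main obstacle; Steps 1--2 are the routine ``multiply by $x$, differentiate, re-use the second-order equation, sum by parts'' computation. As a consistency check one may set $\sigma^2=1/n$ and let $n\to\infty$: the leading behaviour of both sides must agree with the semicircle laws on the relevant supports, which fixes every constant (in particular $8n+2$ and $3$). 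A possibly shorter variant is to read (\ref{GSEdensity}) as an operator identity $p_{GSE,n,\sigma^2}=\mathcal{A}\,p_{GUE,n,\sigma^2}$ for an explicit differential-integral operator $\mathcal{A}$ and to verify $L\circ\mathcal{A}\equiv R$ modulo left multiples of the GUE operator, a purely algebraic computation.
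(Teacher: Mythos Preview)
Your proposal is a plausible program but not yet a proof: you explicitly flag Step~3 (matching the boundary terms) as the ``main obstacle'' and leave it undone, and in Step~2 the treatment of the antiderivative piece $\phi_i\Phi_j$ coming from $\tau_{2n+1}$ is only gestured at. Nothing you wrote is wrong, but the argument is not closed.

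The paper's proof is much shorter and sidesteps your obstacle entirely by working \emph{modularly} with the decomposition~(\ref{GSEdensity}) rather than dissolving everything into a common pool of $\phi_i\phi_j$ products. For $\sigma^2=1/2$ one has $p_{GSE,n}=p_{GUE,2n+1}+\tau_{2n+1}$, and each summand is handled separately. The GUE summand already satisfies the third-order equation of Theorem~\ref{theoremGUEdiff}. For $\tau_m$ the key observation is that it is a \emph{single} term $\sqrt{m/2}\,\phi_{m-1}(x)\,F(x)$ with $F'=\phi_m$, not a sum over indices; one differentiation converts the antiderivative into an ordinary Hermite product, and then the same identities~(\ref{eq:HermIdent1})--(\ref{eq:HermIdent2}) you invoke in Step~1 give, in three lines,
\[
2\tau_m'''+2(2m-1-x^2)\tau_m'-4x\tau_m=(12m-1-6x^2)p_{GUE,m}'+6x\,p_{GUE,m}.
\]
Adding this (with $m=2n+1$) to the GUE equation and collecting terms yields the claim directly: no telescoping, no Abel summation, no boundary matching. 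The organizing principle you are missing is to keep $p_{GUE,2n+1}$ and $\tau_{2n+1}$ apart, each with its own third-order identity, and only combine at the end.

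Two side remarks. First, the GUE density appearing on the right-hand side is the one from~(\ref{GSEdensity}), namely $p_{GUE,2n+1,\sigma^2}$; the paper's display writes $n$ but the proof makes the index clear. Your computation of $R[p_{GUE,n,\sigma^2}]$ with range $0\le k\le n-1$ would therefore not match. Second, your normalization $p_{GUE,n,\sigma^2}=\tfrac1n\sum\phi_k^2$ is off by the factor $1/n$ relative to the paper's convention; this would propagate through every constant you try to pin down in Step~3.
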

\begin{proposition}
The density of GSE (\ref{GOEdensity}) satisfies the differential equation
\begin{multline}
    4\sigma^4 p_{GOE,n,\sigma^2}'''(x) + (\sigma^2(4n-2)- x^2)p_{GOE,n,\sigma^2}'(x) - 2x p_{GOE,n,\sigma^2}(x) = \\
    =-3\sigma^2 p_{GUE,n,\sigma^2}'-3x p_{GUE,n,\sigma^2}.
\end{multline}
\end{proposition}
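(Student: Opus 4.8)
The plan is to verify the equation directly from the explicit density formula~(\ref{GOEdensity}). Let $\psi_k$ denote the Hermite (harmonic-oscillator) eigenfunctions normalised so that $p_{GUE,n,\sigma^2}=\sum_{k=0}^{n-1}\psi_k^2$, with each $\psi_k$ solving the attendant rescaled Hermite equation. The key structural point is that the coefficient $\sigma^2(4n-2)$ in the left-hand operator $\mathcal L:=4\sigma^4\partial^3+(\sigma^2(4n-2)-x^2)\partial-2x$ is chosen exactly so that $\mathcal L$ annihilates the single Hermite function $\psi_{n-1}$ --- this is immediate from the rescaled Hermite equation for $\psi_{n-1}$. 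Formula~(\ref{GOEdensity}) exhibits $p_{GOE,n,\sigma^2}$ as the GUE density $p_{GUE,n,\sigma^2}$ of the same size plus a correction of the form $c_n\,\psi_{n-1}(x)\,G(x)$, where $G$ is a bounded $\sign$-transform of $\psi_n$ (so that $G'=\psi_n$) and $c_n$ is explicit, together with an extra term proportional to $\psi_{n-1}$ that occurs for one parity of $n$ and that $\mathcal L$ kills outright. Besides this I would use: (a) the rescaled Hermite equation; (b) the Christoffel--Darboux identity at coincident arguments, which writes $p_{GUE,n,\sigma^2}$ as a multiple of the Wronskian of $\psi_n$ and $\psi_{n-1}$, and $p_{GUE,n,\sigma^2}'$ as a multiple of $\psi_n\psi_{n-1}$; (c) the derivative and recurrence relations converting $\psi_{n-1}'\psi_n$, $\psi_{n-1}\psi_n'$ and $(\psi_{n-1}\psi_n)'$ into $p_{GUE,n,\sigma^2}$ and its derivatives; and (d) the homogeneous third-order ODE satisfied by $p_{GUE,n,\sigma^2}$ itself, which is available from the Gaussian Unitary Ensemble case, or else obtained in a couple of lines by differentiating the classical quadratic relation among $p_{GUE}$, $p_{GUE}'$ and $p_{GUE}''$ furnished by (b).

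The core step is to apply $\mathcal L$ to $p_{GUE,n,\sigma^2}+c_n\psi_{n-1}G$. Differentiating $\psi_{n-1}G$ three times and, at each stage, replacing $\psi_{n-1}''$ via (a) and the cross-products via (c), one finds that every term still carrying the non-local factor $G$ cancels --- this is precisely where the tuning of $\mathcal L$ to the index $n-1$ is used --- so that $\mathcal L[\psi_{n-1}G]$ collapses to an explicit \emph{local} combination of $p_{GUE,n,\sigma^2}'''$ and $p_{GUE,n,\sigma^2}'$; the third derivative is then eliminated with (d). Separately, $\mathcal L[p_{GUE,n,\sigma^2}]$ need not vanish, but $\mathcal L$ differs from the operator annihilating $p_{GUE,n,\sigma^2}$ only by a first-order operator with polynomial coefficients, so $\mathcal L[p_{GUE,n,\sigma^2}]$ is again an explicit combination of $p_{GUE,n,\sigma^2}'$ and $p_{GUE,n,\sigma^2}$. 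Adding the two contributions and collecting the coefficients of $p_{GUE,n,\sigma^2}''$, $p_{GUE,n,\sigma^2}'$ and $p_{GUE,n,\sigma^2}$: the coefficient of $p_{GUE,n,\sigma^2}''$ vanishes for exactly the correct value of $c_n$ (namely the Christoffel--Darboux constant), the $x$-dependence drops out of the coefficient of $p_{GUE,n,\sigma^2}'$, and what is left is precisely $-3\sigma^2 p_{GUE,n,\sigma^2}'-3x\,p_{GUE,n,\sigma^2}$. As every manipulation is covariant under the rescaling $x\mapsto x/\sqrt{2\sigma^2}$, it suffices to carry out the computation at one convenient value of $\sigma$ (where the $\psi_k$ are the standard Hermite functions) and then restore $\sigma$.

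I expect the friction to sit in two places. First, pinning down the precise normalised form of~(\ref{GOEdensity}): the constant $c_n$, and the parity-dependent bookkeeping --- the $\sign$-transform $G$ has different behaviour at $\pm\infty$ for the two parities of $n$, and the extra $\psi_{n-1}$-term appears only for one of them, so one must check that the variant used for the ``other'' parity is handled by the same operator identity. Second, the symbol manipulation behind the collapse of $\mathcal L[\psi_{n-1}G]$: the intermediate expressions mix $\psi_{n-1}$, $\psi_n$, their derivatives and the Wronskian, and the disappearance of $G$ only becomes visible once all of these have been rewritten consistently through $p_{GUE,n,\sigma^2}$ and its derivatives via (b) and (c). Granting (d), the remainder is a finite if somewhat intricate verification. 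The preceding proposition for GSE is proved the same way: (\ref{GSEdensity}) presents $p_{GSE,n,\sigma^2}$ as a multiple of the relevant GUE density plus a correction built from $\psi_{2n}$ and a \emph{derivative} (dual to the antiderivative appearing here), the constant $\sigma^2(8n+2)$ tunes the corresponding operator to the index $2n$, the same cancellation takes place, and the same source term $-3\sigma^2 p_{GUE,n,\sigma^2}'-3x\,p_{GUE,n,\sigma^2}$ emerges.
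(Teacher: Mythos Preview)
Your plan is essentially the paper's own proof: it splits $p_{o}=p_{u}+\tau_{n}+\alpha_{n}$, observes that the operator $\mathcal L$ (at $\sigma^{2}=1/2$) annihilates $\alpha_{n}\propto\phi_{n-1}$ by the Hermite ODE, isolates the computation of $\mathcal L[\tau_{n}]$ as a separate lemma (where, exactly as you predict, the non-local $G$-factor drops out and one is left with a local expression in $p_{u}'$ and $p_{u}$), and then adds the contribution $\mathcal L[p_{u}]$ read off from the GUE equation. One small inaccuracy in your closing aside: the GSE density~(\ref{GSEdensity}) uses the \emph{same} antiderivative-type correction $\tau_{2n+1}$ as GOE, not a ``dual'' derivative construction---the GSE proof is literally the GOE argument at index $2n+1$ (with no $\alpha$-term since $2n+1$ is odd), which is why the two differential equations coincide under $n\mapsto 2n+1$.
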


\section{Introduction and Preliminaries}
Recall the following well-known result on asymptotics for distribution of eigenvalues for random matrices.

\begin{definition}
    A Wigner matrix $X_n = (X_{ij})_{i,j=1}^n$ is a random $n \times n$ matrix such that,\\
    (i)~$X_n$ is self-adjoint: $X_{ij}=\overline{X_{ji}}$,\\
    (ii)~$X_{ij}$ with $i\le j$ are independent,\\
    (iii)~$X_{ij}$ with $i<j$ are identically distributed with zero mean and variance equal to~$1$,\\
    (iv)~$X_{ii}$ are identically distributed with bounded mean and variance.
\end{definition}

\begin{theorem}[Wigner's Semicircle Law]\label{thm:Wigner}
    Let $X_n$ be a $n \times n$ Wigner matrix. Then scaled empirical spectral distribution of eigenvalues $\lambda_j(X_n)$ converges almost surely (hence also in probability and in expectation) to the Wigner Semicircle distribution:
    \begin{equation*}
        \frac{1}{n}\sum_{j=1}^n \delta_{\lambda_j(X_n)/\sqrt{n}} \xrightarrow[n \to \infty]{w} \mathbbm{1}_{[-2;2]}\frac{1}{2\pi}\sqrt{4-x^2}dx.
    \end{equation*}
\end{theorem}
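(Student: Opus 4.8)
I would prove this by the method of moments, supplemented by a variance estimate for the almost sure statement. The semicircle law $\mu_{sc}=\mathbbm{1}_{[-2,2]}\frac1{2\pi}\sqrt{4-x^2}\,dx$ has vanishing odd moments and even moments equal to the Catalan numbers, $\int x^{2m}\,d\mu_{sc}=C_m=\frac1{m+1}\binom{2m}{m}$; since $C_m\le 4^m$ this moment sequence satisfies Carleman's condition, so $\mu_{sc}$ is the unique probability measure with these moments. Hence, to prove $L_n:=\frac1n\sum_{j=1}^n\delta_{\lambda_j(X_n)/\sqrt n}\xrightarrow{w}\mu_{sc}$ almost surely, it suffices to show that for each fixed $k\ge1$ the random variable $\int x^k\,dL_n=n^{-1-k/2}\Tr(X_n^k)$ converges almost surely to $\int x^k\,d\mu_{sc}$: on the almost-sure event where all these limits hold, the second moments of $L_n$ are bounded, so $(L_n)$ is tight, and every weak subsequential limit has moments $(C_m)$ and thus equals $\mu_{sc}$.

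Before the moment computation I would make two reductions, each controlled by the Hoffman--Wielandt inequality, which bounds the $2$-Wasserstein distance between the spectral measures of two Hermitian matrices by the normalized Frobenius norm of their difference. First, deleting the diagonal: with $D_n=\mathrm{diag}(X_{11},\dots,X_{nn})$, the spectral measure $L_n'$ of $X_n-D_n$ satisfies $W_2(L_n,L_n')^2\le\frac1{n^2}\sum_i X_{ii}^2\to0$ almost surely by the law of large numbers, so $L_n$ and $L_n'$ have the same weak limit. Second, truncation: replacing each off-diagonal $X_{ij}$ by $X_{ij}\mathbbm{1}_{\{|X_{ij}|\le C\}}$ and recentering and rescaling it to keep mean $0$ and variance $1$ gives a measure $L_n^{(C)}$ with $\limsup_n W_2(L_n,L_n^{(C)})=\mathbb{E}[|X_{12}-X_{12}^{(C)}|^2]^{1/2}\to0$ as $C\to\infty$; sending $C\to\infty$ after $n\to\infty$ reduces us to the case of bounded entries, which have finite moments of every order.

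For the reduced model, expand $\Tr(X_n^k)=\sum_{i_1,\dots,i_k}X_{i_1i_2}X_{i_2i_3}\cdots X_{i_ki_1}$ as a sum over closed walks $w=(i_1\to i_2\to\cdots\to i_k\to i_1)$ of length $k$ on the complete graph on $\{1,\dots,n\}$. Taking expectations, independence and mean zero of the off-diagonal entries force the contribution of $w$ to vanish unless every edge of $w$ is used at least twice; a counting estimate shows that walks using some edge three or more times, or whose edge set contains a cycle, total only $O(n^{k/2})$ and are thus negligible after division by $n^{1+k/2}$. The surviving walks use exactly $k/2$ distinct edges, each exactly twice, spanning a tree --- which forces $k=2m$ --- and are in bijection with pairs (a rooted plane tree with $m$ edges, an injective labelling of its $m+1$ vertices in $\{1,\dots,n\}$), of which there are $C_m\cdot n(n-1)\cdots(n-m)=C_m n^{m+1}(1+o(1))$, each of expectation $1$ since off-diagonal entries have unit variance. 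Hence $\mathbb{E}\int x^{2m}\,dL_n\to C_m$ and $\mathbb{E}\int x^{2m+1}\,dL_n\to0$, i.e.\ convergence in expectation. For the almost sure upgrade, $\mathrm{Var}(\Tr(X_n^k))$ is a double sum over closed walks whose covariance is nonzero only when the two walks share an edge, and the analogous count gives $\mathrm{Var}(\int x^k\,dL_n)=O(n^{-2})$; Chebyshev and Borel--Cantelli then yield the required almost sure convergence of each moment.

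The main obstacle is the combinatorics of the third step: proving rigorously that, in the scaling limit, exactly the tree-like exactly-doubled walks survive, while higher edge multiplicities, walks containing cycles, and --- in the variance estimate --- pairs of walks sharing too few edges all contribute at strictly lower order in $n$. This is the point at which the bijection with Dyck paths / non-crossing pair partitions produces the Catalan numbers, and hence the semicircle density. An alternative that trades this combinatorics for a fixed-point analysis is the resolvent method: one shows that $m_n(z)=\frac1n\Tr((X_n/\sqrt n-z)^{-1})$ concentrates around a deterministic limit $m(z)$ solving the self-consistent equation $m(z)=-(z+m(z))^{-1}$, whose solution in the upper half-plane is the Stieltjes transform of $\mu_{sc}$, with the analytic work carried by Schur complements and resolvent perturbation bounds.
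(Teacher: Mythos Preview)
Your proof sketch is the standard moment-method argument and is essentially correct. However, the paper does not prove Theorem~\ref{thm:Wigner} at all: it is merely \emph{recalled} in the preliminaries as a well-known background result, with no proof or even a reference to one. So there is nothing to compare your approach against --- you have supplied a proof where the paper supplies none.

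A couple of minor technical points on your sketch, in case you intend to write it out in full. In the truncation step, the quantity $\limsup_n W_2(L_n,L_n^{(C)})$ is random, and what you actually control almost surely via Hoffman--Wielandt and the strong law is $\limsup_n W_2(L_n,L_n^{(C)})^2\le \mathbb{E}\bigl|X_{12}-X_{12}^{(C)}\bigr|^2$; the recentering/rescaling also needs a word to ensure the truncated variable still has variance exactly $1$ (or else track the slightly different variance through the tree count). In the combinatorial step, the phrase ``walks whose edge set contains a cycle'' is slightly imprecise: the relevant dichotomy is between walks whose underlying graph is a tree on $k/2+1$ vertices with each edge traversed exactly twice (the surviving term) and all other even-multiplicity walks, which have at most $k/2$ vertices and hence contribute $O(n^{k/2})$. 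These are cosmetic; the architecture of your argument is sound.
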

In special cases it is possible to compute the distribution of the eigenvalues explicitly, for example, in the case of Gaussian Ensembles.
\begin{definition}
    \emph{Gaussian Orthogonal Ensemble} (\emph{GOE}), \emph{Gaussian Unitary Ensemble} (\emph{GUE}), and \emph{Gaussian Symplectic Ensemble} (\emph{GSE}) are ensembles of random matrices satisfying conditions~(i) and~(ii) of Theorem~\ref{thm:Wigner} such that
    for any $i<j$  one has $X_{ij}\sim N(0,2\sigma^2)_{\mathbb{R}}$ for GOE, $X_{ij}\sim N(0,\sigma^2)_{\mathbb{C}}$ for GUE, 
    $X_{ij}\sim N(0,\sigma^2 / 2)_{\mathbb{H}}$ for GSE, and $X_{ii}\sim N(0,\sigma^2)_{\mathbb{R}}$ for all three ensembles.
\end{definition}

If $\sigma^2 = 1/2$ all these ensembles satisfy Wigner's Semicircle Law.

Let $p_n$ be \emph{mean density} for distribution of eigenvalues for one of these ensembles:
 \begin{equation*}
    \frac{1}{n}\mathbb{E}\sum_{j=1}^n \mathbbm{1}_{[a,b]}(\lambda_j)=\int_a^b p_n(x)dx,
\end{equation*}
The functions $p_n(x)$ can be calculated explicitly, see \cite{M} and formulas (\ref{GUEdensity}), (\ref{GSEdensity}) and (\ref{GOEdensity}) below.

U.~Haagerup and S.~Thorbjornsen~\cite{HT2} derived the explicit expression for the moment generating function (the bilateral Laplace transform) of the GUE density in terms of confluent hypergeometric functions. F.~Gotze and A.~Tikhomirov~\cite{G} used this expression to find the differential equation (\ref{GUEdiff}) on the GUE density. Using this differential equation U.~Haagerup and S.~Thorbjornsen found asymptotic expansion (\ref{GUEasymp}) as a power series in~$1/n$. They did not deal with the convergence of the series. O. Kopelevitch showed that this series is convergent for a certain class of entire functions and that this class is ``sharp'' in some sense (see remark after Theorem~1 in \cite{K}). M.~Ledoux~\cite{L} found differential equations on the moment generating functions for all Gaussian Ensembles, and used them to find a recursive formula for the moments of these ensembles.

The aim of this paper is to find convergent asymptotic expansion for GOE and GSE analogous to Kopelevitch's expansion for GUE.

The main steps to do this are the following. In Section \ref{SectionDifferential} we derive differential equations on densities for Gaussian Ensembles. Note that in the GUE case we do not use the explicit calculation of the moment generating function but give a new proof. In Section \ref{SectionAsymptotic} we use these differential equations to obtain the convergent expansions for GSE and GOE by reduction to Kopelevitch's theorem~\cite{K}.

In Section \ref{SectionDifferentialMoments} we derive differential equations on the moment generating functions of densities for Gaussian Ensembles. These results were obtained by M.~Ledoux~\cite{L} but we give an alternative proof. In works of U.~Haagerup, S.~Thorbjornsen~\cite{HT} and M.~Ledoux~\cite{L} a question on explicit expressions of the moment generating functions for matrices from GSE or GOE was posed: 

\begin{displayquote}
"It would be interesting to know the counterparts of the explicit formulas (0.2) [\textit{the moment generating function}] for random matrices with real or symplectic Gaussian entries. The real and symplectic counterparts of the [GUE] density are computed in Mehta's book ... [\eqref{GSEdensity} and \eqref{GOEdensity}] ... However, the formulas for these densities are much more complicated than in the complex case."~\cite{HT}
\end{displayquote}

In Section \ref{SectionMoment} we calculate these expressions in terms of confluent hypergeometric functions as was supposed by M.~Ledoux~\cite{L} and in Section \ref{SectionMomentExpansion} we give their explicit $1/n$-expansions.

Recall that the \emph{confluent hypergeometric function} is defined as follows:
\begin{equation}\label{Hyper}
    {_1F_1}(a,b,x) := \sum_{j=0}^{\infty}\frac{a(a+1)\dots(a+j-1)}{b(b+1)\dots(b+j-1)}\frac{x^j}{j!}
\end{equation}
for $a,b,x \in \mathbb{C}$ such that $b \not\in \mathbb{Z} \setminus \mathbb{N}$. The function ${_1F_1}(a,b,x)$ is an entire function of $x$ (a polynomial in the case $a \in \mathbb{Z} \setminus \mathbb{N}$) that satisfies the differential equation
\begin{equation*}
  x y'' + (b-x)y' - ay = 0.
\end{equation*}
The \emph{Hermite functions} $\phi_k(x)$ and \emph{Hermite polynomials} $H_k(x)$ are defined as follows:
\begin{equation*}
    \phi_k(x) := \frac{1}{\sqrt{2^k k! \sqrt{\pi}}}H_k(x)e^{-\frac{x^2}{2}},\quad 
    H_k(x) := (-1)^k e^{x^2} \times \left( \frac{d^k}{dx^k}e^{-x^2} \right).
\end{equation*}
Then the mean density functions for GUE, GSE and GOE are given by the formulas (see \cite{M})
\begin{align}\label{GUEdensity}
    p_{GUE,n,\sigma^2}(x)&{}= \frac{1}{\sigma\sqrt{2}}\sum_{k=0}^{n-1}\phi_k\left(\frac{x}{\sigma\sqrt{2}}\right)^2,\\
    \label{GSEdensity}
    p_{GSE,n,\sigma^2}(x)&{} = p_{GUE,2n+1,\sigma^2}(x)+\frac{1}{\sigma\sqrt{2}}\tau_{2n+1}\left(\frac{x}{\sigma \sqrt{2}}\right),\\
    \label{GOEdensity}
    p_{GOE,n,\sigma^2}(x)&{} = p_{GUE,n,\sigma^2}(x)+\frac{1}{\sigma\sqrt{2}}\left(\tau_{n}\left(\frac{x}{\sigma \sqrt{2}}\right)+\alpha_{n}\left(\frac{x}{\sigma \sqrt{2}}\right)\right),
\end{align}
where
\begin{equation}\label{Taufunction}
    \tau_{n}(x) = \sqrt{\frac{n}{2}}\phi_{n-1}(x)\frac{1}{2}\int_{\mathbb{R}} \sign\left(x-t\right)\phi_{n}(t)dt,
\end{equation}
\begin{equation}\label{Alphafunction}
    \alpha_n(x) = \begin{cases}\phi_{n-1}(x)\left(\int_{\mathbb{R}}\phi_{n-1}(t)dt)\right)^{-1},&n\text{ is odd},\\
    0,&n\text{ is even.}\end{cases}
\end{equation}
\section{Differential equations on densities}\label{SectionDifferential}
Here we obtain differential equations on densities of Gaussian ensembles. In this section we use the following notation:
\begin{align*}
    p_{u}(x)&{}= p_{GUE,n,1/2}(x)= \sum_{k=0}^{n-1}\left[\phi_k(x)\right]^2,\\
    p_{s}(x)&{}= p_{GSE,n,1/2}(x) = p_u(x)+\tau_{2n+1}(x),\\
    p_{o}(x)&{}= p_{GOE,n,1/2}(x)= p_u(x) + \tau_{n}(x) + \alpha_n(x).
\end{align*}
\subsection{GUE case}
F.~Gotze and A.~Tikhomirov in \cite{G} derived the following theorem. For completeness, we provide here an alternative proof of their result.  
\begin{theorem}[F.~Gotze, A.~Tikhomirov, \cite{G}, Lemma 2.1]\label{theoremGUEdiff}
The density of GUE (\ref{GUEdensity}) satisfies the differential equation
\begin{equation}\label{GUEdiff}
    \sigma^4 p_{GUE,n,\sigma^2}'''(x) + (4n\sigma^2- x^2)p_{GUE,n,\sigma^2}'(x) + x p_{GUE,n,\sigma^2}(x) = 0.
\end{equation}
\end{theorem}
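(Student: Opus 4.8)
The plan is to reduce to the normalization $\sigma^2 = 1/2$ and then verify the equation directly with Hermite functions. Writing $c = \sigma\sqrt{2}$, the density (\ref{GUEdensity}) is $p_{GUE,n,\sigma^2}(x) = c^{-1}p_u(x/c)$ with $p_u(x) = \sum_{k=0}^{n-1}\phi_k(x)^2$, and substituting this into (\ref{GUEdiff}) and setting $x = cy$ turns it --- because $\sigma^4 c^{-4} = \tfrac14$, $4n\sigma^2 c^{-2} = 2n$, and the $x$-weighted terms rescale consistently --- into the single identity
\begin{equation*}
    \tfrac14\, p_u'''(x) + (2n-x^2)\,p_u'(x) + x\,p_u(x) = 0 .
\end{equation*}
So it suffices to prove this reduced identity.

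Next I would record two standard facts about the $\phi_k$. From the definition, $\phi_k' = \sqrt{k/2}\,\phi_{k-1} - \sqrt{(k+1)/2}\,\phi_{k+1}$, which together with the three-term recurrence $x\phi_k = \sqrt{(k+1)/2}\,\phi_{k+1} + \sqrt{k/2}\,\phi_{k-1}$ gives the ladder relations
\begin{equation*}
    \phi_n' = -x\phi_n + \sqrt{2n}\,\phi_{n-1}, \qquad \phi_{n-1}' = x\phi_{n-1} - \sqrt{2n}\,\phi_n .
\end{equation*}
Moreover, since the $\phi_k$ are $L^2(\mathbb{R})$-orthonormal with three-term coefficient $\sqrt{n/2}$, the confluent Christoffel--Darboux identity gives
\begin{equation*}
    p_u = \sum_{k=0}^{n-1}\phi_k^2 = \sqrt{\tfrac n2}\,\bigl(\phi_n'\phi_{n-1} - \phi_n\phi_{n-1}'\bigr),
\end{equation*}
and eliminating $\phi_n',\phi_{n-1}'$ here by the ladder relations collapses this to the closed form
\begin{equation*}
    p_u = n\,\phi_n^2 + n\,\phi_{n-1}^2 - \sqrt{2n}\,x\,\phi_n\phi_{n-1} .
\end{equation*}
(As a cross-check, one gets $p_u'$ independently by telescoping: $p_u' = 2\sum_k \phi_k\phi_k' = 2\sum_k\bigl(\sqrt{k/2}\,\phi_k\phi_{k-1} - \sqrt{(k+1)/2}\,\phi_k\phi_{k+1}\bigr) = -\sqrt{2n}\,\phi_n\phi_{n-1}$.)

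From here everything is a finite computation: differentiating the closed form and each time eliminating $\phi_n',\phi_{n-1}'$ via the ladder relations yields
\begin{equation*}
    p_u' = -\sqrt{2n}\,\phi_n\phi_{n-1},\quad p_u'' = 2n(\phi_n^2 - \phi_{n-1}^2),\quad p_u''' = -4nx(\phi_n^2 + \phi_{n-1}^2) + 8n\sqrt{2n}\,\phi_n\phi_{n-1}.
\end{equation*}
Plugging these together with the closed form for $p_u$ into $\tfrac14 p_u''' + (2n-x^2)p_u' + x\,p_u$ and collecting the coefficients of $\phi_n^2$, $\phi_{n-1}^2$ and $\phi_n\phi_{n-1}$ separately, each one cancels, which establishes the reduced identity and hence (\ref{GUEdiff}).

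There is no analytic difficulty: each $\phi_k$ is smooth with Gaussian decay, so termwise differentiation of the finite sum and the limit in Christoffel--Darboux are immediate. I expect the only real obstacle to be bookkeeping --- the normalization-dependent constants $\sqrt{n/2}$ (Christoffel--Darboux) and $\sqrt{2n}$ (ladder relations) must be exactly right, since a slip there propagates through every later line --- and this is best neutralized by fixing the conventions for $H_k$ and $\phi_k$ at the outset and checking the two relations on $\phi_0,\phi_1$.
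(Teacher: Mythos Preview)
Your proof is correct and follows essentially the same approach as the paper: reduce to $\sigma^2=1/2$ by scaling, then verify the equation by expressing $p_u$ and its derivatives in terms of $\phi_n,\phi_{n-1}$ via standard Hermite identities. The only cosmetic difference is that the paper uses the second-order oscillator equation $\phi_n''=(x^2-2n-1)\phi_n$ together with the identity $2xp_u=\sqrt{2n}\bigl((2n-x^2)\phi_n\phi_{n-1}+\phi_n'\phi_{n-1}'\bigr)$ to get $p_u'''=-4(2n-x^2)p_u'-4xp_u$ directly, whereas you use the first-order ladder relations and the Christoffel--Darboux closed form for $p_u$; these are equivalent packagings of the same computation.
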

\begin{proof}
    By changing of variable $x \mapsto \sqrt{2}\sigma x$ we reduce the general case to the $\sigma^2=1/2$ case.
    
    One can show that (see \eqref{DerivSumSq})
    \begin{equation}\label{eq:HermIdent1}
        p_{u}' = -\sqrt{2n}\phi_n\phi_{n-1},
    \end{equation}
    and, using identities
    \begin{equation}\label{eq:HermIdent2}
        \phi_n''(x) = (x^2 - 2n - 1)\phi_n,\quad 
        2x p_u(x) = \sqrt{2n}\left((2n-x^2)\phi_n\phi_{n-1}+\phi_n'\phi_{n-1}'\right)
    \end{equation}
    that
    \begin{equation*}
        p_{u}''' = -\sqrt{2n}\left(\phi_n''\phi_{n-1} + 2\phi_n'\phi_{n-1}' + \phi_n\phi_{n-1}''\right)=
        -4(2n- x^2)p_{u}' - 4 x p_{u}.\qedhere
    \end{equation*}
\end{proof}

\subsection{GSE case}
\begin{lemma}
The function $\tau_n(x)$ defined in \eqref{Taufunction} satisfies the differential equation
\begin{equation}\label{Taudiff}
    2\tau_n''' + 2(2n - 1 - x^2)\tau_n' - 4x\tau_n = (12n-1-6x^2)p_u' + 6xp_u.
\end{equation}
\end{lemma}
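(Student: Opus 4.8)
The plan is to express $\tau_n$ through the two Hermite functions $\phi_{n-1}$ and $\phi_n$ together with the auxiliary function $\psi_n(x) := \tfrac12\int_{\mathbb R}\sign(x-t)\phi_n(t)\,dt$, so that $\tau_n = \sqrt{n/2}\,\phi_{n-1}\psi_n$. The key observations are that $\psi_n' = \phi_n$ (differentiating the sign-integral), and that $\phi_n$ obeys the Hermite ODE $\phi_n'' = (x^2 - 2n - 1)\phi_n$ used already in \eqref{eq:HermIdent2}, together with the standard ladder relations $\phi_n' = \sqrt{n/2}\,\phi_{n-1} - \sqrt{(n+1)/2}\,\phi_{n+1}$ and $x\phi_n = \sqrt{n/2}\,\phi_{n-1} + \sqrt{(n+1)/2}\,\phi_{n+1}$ (equivalently $\phi_n' + x\phi_n = \sqrt{2n}\,\phi_{n-1}$, $\phi_n' - x\phi_n = -\sqrt{2(n+1)}\,\phi_{n+1}$). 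First I would compute $\tau_n'$, $\tau_n''$, $\tau_n'''$ by the product rule, replacing every second derivative of $\phi_{n-1}$ via the Hermite ODE and every derivative of $\psi_n$ by $\phi_n$; this leaves an expression in the "monomials" $\phi_{n-1}\psi_n$, $\phi_{n-1}'\psi_n$, $\phi_{n-1}\phi_n$, $\phi_{n-1}'\phi_n$, and $\phi_{n-1}''\phi_n$ (the last again reducible by the ODE), with polynomial coefficients in $x$.

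Next I would form the left-hand side $2\tau_n''' + 2(2n-1-x^2)\tau_n' - 4x\tau_n$ and collect terms. The terms proportional to $\psi_n$ should cancel: the combination $2\phi_{n-1}''' + 2(2n-1-x^2)\phi_{n-1}' - 4x\phi_{n-1}$ is, up to the $\sqrt{n/2}$ normalization, exactly the GUE-type identity $p_u''' = -4(2n-x^2)p_u' - 4xp_u$ evaluated structurally — more precisely, $\phi_{n-1}$ times $\psi_n$ inherits from the Hermite ODE the same third-order relation that $p_u$ satisfies with index shifted, so the $\psi_n$-part of the LHS vanishes identically. What survives is a combination of $\phi_{n-1}\phi_n$ and $\phi_{n-1}'\phi_n'$ (after using $\phi_{n-1}'' = (x^2 - 2n+1)\phi_{n-1}$), with explicit polynomial coefficients. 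The right-hand side, meanwhile, is rewritten using \eqref{eq:HermIdent1}–\eqref{eq:HermIdent2}: $p_u' = -\sqrt{2n}\,\phi_n\phi_{n-1}$ and $2xp_u = \sqrt{2n}\big((2n-x^2)\phi_n\phi_{n-1} + \phi_n'\phi_{n-1}'\big)$, so $(12n-1-6x^2)p_u' + 6xp_u$ becomes an explicit $\sqrt{2n}$-multiple of a linear combination of $\phi_n\phi_{n-1}$ and $\phi_n'\phi_{n-1}'$.

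Finally I would match the two sides: both are now of the form $A(x)\,\phi_n\phi_{n-1} + B(x)\,\phi_n'\phi_{n-1}'$ (any residual $\phi_n'\phi_{n-1}$ or $\phi_n\phi_{n-1}'$ cross terms should be eliminated by the ladder relations, which convert $\phi_n'$ and $\phi_{n-1}'$ into $x\phi_n$, $x\phi_{n-1}$ plus neighbor terms — here one must be a little careful and possibly also use $\phi_n\phi_{n-1}' - \phi_n'\phi_{n-1}$-type Wronskian identities to keep the basis closed). Checking $A(x)$ and $B(x)$ agree on both sides is then a finite polynomial identity in $x$ for each fixed $n$, i.e. a bounded verification. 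The main obstacle I anticipate is bookkeeping: ensuring that after differentiating three times and substituting the ODE repeatedly, the cross terms genuinely collapse into the two-dimensional span $\{\phi_n\phi_{n-1},\ \phi_n'\phi_{n-1}'\}$ rather than leaving a stray $\phi_{n+1}$ or $\phi_n'\phi_{n-1}$ term — this is exactly where the precise form of the coefficients $2n-1$, $12n-1$, $6$ in the statement is forced, and getting those constants right is the crux of the computation.
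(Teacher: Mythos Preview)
Your approach is essentially the same as the paper's: write $\tau_n=\sqrt{n/2}\,\phi_{n-1}\psi_n$ with $\psi_n'=\phi_n$, differentiate three times, observe that the $\psi_n$-coefficient $\phi_{n-1}'''+(2n-1-x^2)\phi_{n-1}'-2x\phi_{n-1}$ vanishes by the Hermite ODE, reduce the survivors via $\phi_{n-1}''=(x^2-2n+1)\phi_{n-1}$ and $\phi_n''=(x^2-2n-1)\phi_n$, and then translate back to $p_u,p_u'$ using \eqref{eq:HermIdent1}--\eqref{eq:HermIdent2}. One reassurance: your worry about stray cross terms $\phi_n'\phi_{n-1}$ or $\phi_n\phi_{n-1}'$ is unnecessary---the product-rule expansion of $(\phi_{n-1}\psi_n)'''$ together with $\psi_n^{(k)}=\phi_n^{(k-1)}$ lands directly in $\{\phi_{n-1}''\phi_n,\ \phi_{n-1}'\phi_n',\ \phi_{n-1}\phi_n''\}$, so no ladder or Wronskian manoeuvres are needed.
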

\begin{proof}
    Using identities \eqref{eq:HermIdent1}, \eqref{eq:HermIdent2} we get
    \begin{gather*}
       2\tau_n'''(x) = \sqrt{2n}\left(\phi_{n-1}''\frac{1}{2}\int_{\mathbb{R}}\sign(x-t)\phi_n(t)dt+\phi_{n-1}'\phi_n+\phi_{n-1}\phi_n'\right)',\\
        2\tau_n''' + 2\left((2n - 1 - x^2)\tau_n\right)' = \sqrt{2n}\left(2\phi_{n-1}''\phi_n+\phi_{n-1}\phi_n''+3\phi_n'\phi_{n-1}'\right),
    \end{gather*}
    and finally~\eqref{Taudiff}.
\end{proof}

\begin{proposition}\label{theoremGSEdiff}
The density of GSE (\ref{GSEdensity}) satisfies the differential equation
\begin{multline}\label{GSEdiff}
    4\sigma^4 p_{GSE,n,\sigma^2}'''(x) + (\sigma^2(8n+2)- x^2)p_{GSE,n,\sigma^2}'(x) - 2x p_{GSE,n,\sigma^2}(x) = \\
    =-3\sigma^2 p_{GUE,n,\sigma^2}'-3x p_{GUE,n,\sigma^2}.
\end{multline}
\end{proposition}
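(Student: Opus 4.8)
The plan is to feed the decomposition \eqref{GSEdensity} of the GSE density into the two third-order differential equations already established — the GUE equation \eqref{GUEdiff} and the equation \eqref{Taudiff} for $\tau_n$ — and to extract \eqref{GSEdiff} as the right linear combination.

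First I would reduce to the case $\sigma^2 = 1/2$, exactly as in the proof of Theorem~\ref{theoremGUEdiff}: under the change of variable $x \mapsto \sqrt{2}\,\sigma x$ each density rescales as $p \mapsto \tfrac{1}{\sqrt{2}\sigma}p\bigl(\cdot/(\sqrt{2}\sigma)\bigr)$, and one checks that \eqref{GSEdiff} for general $\sigma^2$ is equivalent to the same identity at $\sigma^2 = 1/2$, the derivative weights and the GUE term on the right transforming compatibly. So it suffices to prove, in the notation of Section~\ref{SectionDifferential} with $p_s = p_u + \tau_{2n+1}$ (where $p_u$ is the GUE summand of $p_s$ coming from \eqref{GSEdensity}),
\[
  p_s''' + (4n+1-x^2)\,p_s' - 2x\,p_s = -\tfrac{3}{2}\,p_u' - 3x\,p_u .
\]

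Next I would specialize the two known equations to matrix size $2n+1$. Theorem~\ref{theoremGUEdiff} with $n$ replaced by $2n+1$ and $\sigma^2 = 1/2$ gives a relation of the shape $p_u''' = -(16n+8-4x^2)\,p_u' - 4x\,p_u$, which I use to eliminate $p_u'''$. The lemma \eqref{Taudiff} with $n$ replaced by $2n+1$ gives, after dividing by $2$, an equation of the shape
\[
  \tau_{2n+1}''' + (4n+1-x^2)\,\tau_{2n+1}' - 2x\,\tau_{2n+1} = \bigl(12n+\tfrac{11}{2}-3x^2\bigr)\,p_u' + 3x\,p_u .
\]
The crucial point is that the third-order operator $L := \partial^{3} + (4n+1-x^2)\,\partial - 2x$ occurring on the left here is exactly the operator wanted in the target identity, so it remains only to compute $L[p_u]$ by substituting the GUE relation for $p_u'''$, and then add. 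In the sum the $x^2 p_u'$-terms cancel, the terms linear in $n$ cancel, and the $x\,p_u$-terms combine, leaving precisely $L[p_s] = L[p_u] + L[\tau_{2n+1}] = -\tfrac{3}{2}\,p_u' - 3x\,p_u$; undoing the scaling then yields \eqref{GSEdiff}.

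I do not expect a genuine obstacle, since all the substantive Hermite-function algebra is already packaged into Theorem~\ref{theoremGUEdiff} and into the proof of \eqref{Taudiff}. The one subtlety worth flagging is that these two auxiliary equations are normalized differently — their leading third-order parts sit over different lower-order weights — so one cannot simply add them; one must first rewrite the GUE equation as a formula for $p_u'''$ and recognize the operator $L$ dictated by \eqref{Taudiff} as the correct common form. After that the proof is just the bookkeeping of tracking constants under $n \mapsto 2n+1$ so that they telescope to the coefficients $-\tfrac{3}{2}$ and $-3$ on the right-hand side of \eqref{GSEdiff}.
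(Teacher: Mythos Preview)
Your proposal is correct and essentially identical to the paper's proof: both reduce to $\sigma^2=1/2$, decompose $p_s$ as $p_u^* + \tau_{2n+1}$ with $p_u^* = p_{GUE,2n+1,1/2}$, apply \eqref{GUEdiff} (at size $2n+1$) and \eqref{Taudiff} (at $2n+1$) to each summand, and combine so that the coefficients telescope to $-\tfrac{3}{2}p_u^{*\prime}-3x\,p_u^*$. The only cosmetic difference is that the paper carries an overall factor of $2$ throughout rather than dividing \eqref{Taudiff} by $2$, obtaining $-3p_u^{*\prime}-6x\,p_u^*$ before unscaling.
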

\begin{proof}
    Here $p^{*}_u(x) = p_{GUE,2n+1,1/2}(x)$.
    \begin{equation*}
        2p_s = 2p^{*}_u + 2\tau_{2n+1}.
    \end{equation*}
    Combine (\ref{GUEdiff}) and (\ref{Taudiff}) for $2n+1$:
    \begin{multline*}
        2p_s''' + 2(4n+1-x^2)p_s' - 4xp_s = \\
        = \left(2p^{*\prime\prime\prime}_u + 2(4n+1-x^2)p^{*\prime}_u - 4xp^{*}_u\right) + \left(2\tau_{2n+1}''' + 2(4n+1-x^2)\tau_{2n+1}' - 4x\tau_{2n+1}\right) = \\ 
        = \left((6x^2-24n-14)p^{*\prime}_u - 12xp^{*}_u\right) + \left((24n+11-6x^2)p^{*\prime}_u + 6xp^{*}_u\right) = -3p^{*\prime}_u - 6xp^{*}_u.
    \end{multline*}
    Now (\ref{GSEdiff}) follows by change of variables $x \mapsto x/\sqrt{2}\sigma$.
\end{proof}
\subsection{GOE case}
\begin{proposition}\label{theoremGOEdiff}
The density of GSE (\ref{GOEdensity}) satisfies the differential equation
\begin{multline}\label{GOEdiff}
    4\sigma^4 p_{GOE,n,\sigma^2}'''(x) + (\sigma^2(4n-2)- x^2)p_{GOE,n,\sigma^2}'(x) - 2x p_{GOE,n,\sigma^2}(x) = \\
    =-3\sigma^2 p_{GUE,n,\sigma^2}'-3x p_{GUE,n,\sigma^2}.
\end{multline}
\end{proposition}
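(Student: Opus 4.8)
The plan is to follow the same route as the proof of Proposition~\ref{theoremGSEdiff}: reduce to the normalization $\sigma^2 = 1/2$ by the scaling $x \mapsto \sqrt{2}\sigma x$ (which, exactly as in the GUE and GSE cases, turns \eqref{GOEdiff} into the equivalent statement $2p_o''' + 2(2n-1-x^2)p_o' - 4xp_o = -3p_u' - 6xp_u$), and then exploit the decomposition $p_o = p_u + \tau_n + \alpha_n$. The key observation is that the third-order operator on the left splits across the three summands and that each of the three pieces is already under control.

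Write $\mathcal{D}[f] := 2f''' + 2(2n-1-x^2)f' - 4xf$, so that the goal is $\mathcal{D}[p_o] = -3p_u' - 6xp_u$. By linearity $\mathcal{D}[p_o] = \mathcal{D}[p_u] + \mathcal{D}[\tau_n] + \mathcal{D}[\alpha_n]$. For the $\tau_n$-term there is nothing to do: $\mathcal{D}[\tau_n]$ is by definition the left-hand side of \eqref{Taudiff}, hence $\mathcal{D}[\tau_n] = (12n-1-6x^2)p_u' + 6xp_u$. For the $p_u$-term, Theorem~\ref{theoremGUEdiff} at $\sigma^2 = 1/2$ gives $p_u''' = (4x^2 - 8n)p_u' - 4xp_u$; substituting this to eliminate the third derivative, a one-line computation yields $\mathcal{D}[p_u] = (6x^2 - 12n - 2)p_u' - 12xp_u$. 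Adding these two contributions already produces exactly $-3p_u' - 6xp_u$, so the whole proposition follows once we show that the remaining term vanishes, $\mathcal{D}[\alpha_n] = 0$.

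This last vanishing is the only ingredient not already present in the GSE argument. If $n$ is even then $\alpha_n \equiv 0$ by \eqref{Alphafunction} and there is nothing to prove. If $n$ is odd then $\alpha_n = c\,\phi_{n-1}$ with $c = \bigl(\int_{\mathbb{R}}\phi_{n-1}(t)\,dt\bigr)^{-1}$ a constant, so it suffices to check that $\phi_{n-1}$ is annihilated by $\mathcal{D}$, i.e.\ that $\phi_{n-1}''' + (2n-1-x^2)\phi_{n-1}' - 2x\phi_{n-1} = 0$. Differentiating the Hermite identity $\phi_{n-1}'' = (x^2 - 2n+1)\phi_{n-1}$ (the first identity in \eqref{eq:HermIdent2} with $n$ replaced by $n-1$) gives $\phi_{n-1}''' = 2x\phi_{n-1} + (x^2-2n+1)\phi_{n-1}'$, and plugging this in makes every term cancel.

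Putting the three pieces together gives $2p_o''' + 2(2n-1-x^2)p_o' - 4xp_o = -3p_u' - 6xp_u$, and undoing the rescaling $x \mapsto x/\sqrt{2}\sigma$ exactly as at the end of the proof of Proposition~\ref{theoremGSEdiff} yields \eqref{GOEdiff}. I do not expect any genuine obstacle here: the GUE equation and the $\tau_n$ equation are already available, and the one new fact needed — that the parity correction $\alpha_n$ lies in the kernel of $\mathcal{D}$ — is the elementary Hermite computation above. The only thing requiring care is the bookkeeping of the numerical coefficients and of the $\sigma$-rescaling, just as in the GSE case.
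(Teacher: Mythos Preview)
Your proposal is correct and follows essentially the same approach as the paper: decompose $p_o = p_u + \tau_n + \alpha_n$, apply the operator term by term using \eqref{GUEdiff} and \eqref{Taudiff}, observe that $\alpha_n$ is annihilated via the Hermite identity in \eqref{eq:HermIdent2}, and then rescale. The paper's write-up is terser (it does not spell out the intermediate value of $\mathcal{D}[p_u]$), but the logic and ingredients are identical.
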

\begin{proof}
    Note that $\alpha_n''' + (2n-1-x^2)\alpha_n' - 2x\alpha_n = 0$ by the first identity in~\eqref{eq:HermIdent2}. Then 
    \begin{multline*}
            2p_o''' + 2(2n-1-x^2)p_o' - 4xp_o ={}\\
            {}=\left(2p_u''' + 2(2n-1-x^2)p_u' - 4xp_u\right) +  \left(2\tau_n''' + 2(2n-1-x^2)\tau_n' - 4x\tau_n\right)
        = -3p_u' - 6xp_u,
    \end{multline*}
    and (\ref{GOEdiff}) follows by change of variables $x \mapsto x/\sqrt{2}\sigma$.
\end{proof}

\begin{remark}
The differential equation for GSE case is, exactly, the differential equation for the GOE case of order $2n+1$.
\end{remark}

\section{Asymptotic expansion for Gaussian Ensembles}\label{SectionAsymptotic}

In this section we obtain asymptotic expansions for GSE and GOE from those for GUE.
The latter was found by U.~Haagerup and S.~Thorbjornsen \cite{HT2}, and its convergence was shown by O.~Kopelevitch, \cite{K}. Let us recall their results.

Here we will use the following notation: 
\begin{equation*}
    \hat{p}_{u}(x) = p_{GUE,n,1/n}(x), \quad 
    \hat{p}_{s}(x) = p_{GSE,n,1/n}(x), \quad 
    \hat{p}_{o}(x) = p_{GOE,n,1/n}(x).
\end{equation*}

\begin{proposition}[\cite{HT2}, Proposition 2.4]
    For any $C^{\infty}$-function $g: \mathbb{R} \to \mathbb{C}$, there is a unique $C^{\infty}$-function $f: \mathbb{R} \to \mathbb{C}$ such that
    \begin{equation}\label{HaagerupEquation}
        g(x) = \frac{1}{2\pi}\int_2^2 g(t)\sqrt{4-t^2}dt + (x^2 - 4)f'(x)+3x f(x),
    \end{equation}
    and if $g \in C_b^{\infty}$, then $f \in C_b^{\infty}$.
\end{proposition}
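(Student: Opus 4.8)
The plan is to read the identity as a first-order linear ODE for $f$ whose leading coefficient degenerates at $x=\pm 2$. Set $c_g:=\frac{1}{2\pi}\int_{-2}^{2}g(t)\sqrt{4-t^2}\,dt$; then what must be solved, on all of $\mathbb{R}$ and within $C^\infty$, is $(x^2-4)f'(x)+3xf(x)=g(x)-c_g$. The first step is to observe that the constant $c_g$ is forced: integrating the proposed identity against $\frac{1}{2\pi}\sqrt{4-t^2}$ over $[-2,2]$ and integrating by parts once annihilates the $f$-contribution (the boundary terms vanish thanks to the factor $(4-t^2)^{3/2}$, and $\int_{-2}^2\sqrt{4-t^2}\,dt=2\pi$), so any admissible constant equals $c_g$. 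This settles uniqueness of the constant.

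Next I would solve the ODE separately on $(-\infty,-2)$, $(-2,2)$, $(2,\infty)$ by an integrating factor. On $(-2,2)$ one has $(x^2-4)f'+3xf=-(4-x^2)^{-1/2}\frac{d}{dx}\big[(4-x^2)^{3/2}f\big]$, so every solution is $f(x)=(4-x^2)^{-3/2}\big(C_1-\psi(x)\big)$ with $\psi(x):=\int_{-2}^{x}\sqrt{4-t^2}\,(g(t)-c_g)\,dt$; since the choice of $c_g$ makes $\psi(-2)=\psi(2)=0$, requiring $f$ to stay bounded as $x\to\pm 2$ forces $C_1=0$ and pins down $f$ on $(-2,2)$. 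On each outer interval one similarly has $(x^2-4)f'+3xf=(x^2-4)^{-1/2}\frac{d}{dx}\big[(x^2-4)^{3/2}f\big]$; integrating from the endpoint $\pm 2$ and again imposing boundedness there determines the remaining free constant, so $f$ is uniquely determined on each interval. Hence there is at most one $C^\infty$ solution on $\mathbb{R}$, namely this piecewise function.

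The remaining, and only delicate, point is that the three pieces glue to a genuinely $C^\infty$ function on $\mathbb{R}$, i.e. the singularities at $\pm 2$ are removable to infinite order. Working near $x=2$ and substituting $t=2\pm u$, one finds that both $\psi$ and its outer-interval analogue take the form $\int_{0}^{s}\sqrt{u}\,\Phi(\pm u)\,du$ for a single function $\Phi(w):=\sqrt{4+w}\,(g(2+w)-c_g)$ that is $C^\infty$ near $0$, while $(x^2-4)^{\pm 3/2}=(s(4\pm s))^{\pm 3/2}$. Rescaling $u=sv$ gives $\int_0^s\sqrt u\,\Phi(\pm u)\,du=s^{3/2}\int_0^1\sqrt v\,\Phi(\pm sv)\,dv$, whose last factor is $C^\infty$ in $s$ (for $s$ of either sign) by differentiation under the integral sign. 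Dividing, the $s^{3/2}$ cancels, and since the one-sided expressions are the single smooth function $\sigma\mapsto\int_0^1\sqrt v\,\Phi(\sigma v)\,dv$ evaluated at $\sigma=x-2$, they assemble into one $C^\infty$ function of $x-2$ near $x=2$; the same argument works at $x=-2$. Thus $f\in C^\infty(\mathbb{R})$, and since it solves the ODE off $\{\pm 2\}$ by construction it solves it everywhere by continuity, which gives existence.

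Finally, for $g\in C_b^\infty\Rightarrow f\in C_b^\infty$: on a compact neighbourhood of $[-2,2]$ both $f$ and all its derivatives are bounded by continuity, and for $|x|$ large the explicit formula gives $f(x)=O(1/|x|)$ (the numerator integral is $O(x^2)$, the denominator $\asymp|x|^3$); differentiating the ODE in the form $f'=(g-c_g-3xf)/(x^2-4)$ repeatedly then shows by induction that every $f^{(k)}(x)=O(1/|x|^2)$ as $|x|\to\infty$, so in particular all derivatives of $f$ are bounded. The main obstacle throughout is the infinite-order matching at $x=\pm 2$; the rescaling $u=sv$ is precisely what reduces it to differentiating under an integral sign, but keeping the bookkeeping of the integrating factor $(4-x^2)^{3/2}$ and its sign change across $\pm 2$ straight takes some care.
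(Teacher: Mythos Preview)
The paper does not prove this proposition at all: it is quoted verbatim as Proposition~2.4 of \cite{HT2} and used as a black box, so there is no ``paper's own proof'' to compare against.

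That said, your argument is essentially the standard one and is correct. The identification of the integrating factor, yielding $(4-x^2)^{3/2}f$ on the inner interval and $(x^2-4)^{3/2}f$ on the outer ones, is right, and your check that $\psi(2)=0$ precisely because the constant is $c_g$ is exactly the compatibility condition needed for a bounded (hence smooth) solution to exist. The key step---smooth gluing at $x=\pm 2$---is handled cleanly by your rescaling $u=sv$, which exhibits both one-sided pieces as the single smooth function
\[
\sigma\;\longmapsto\;\frac{1}{(4+\sigma)^{3/2}}\int_0^1\sqrt{v}\,\Phi(\sigma v)\,dv
\]
evaluated at $\sigma=x-2$; differentiation under the integral sign then gives $C^\infty$ regularity across the singular point. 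Your decay argument for the $C_b^\infty$ conclusion is also fine: once $f(x)=O(1/|x|)$ is established from the explicit integral formula, bootstrapping via $f'=(g-c_g-3xf)/(x^2-4)$ and its successive derivatives gives $f^{(k)}=O(|x|^{-2})$ for all $k\ge 1$.

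One small remark: your opening paragraph about ``uniqueness of the constant'' is slightly beside the point, since the statement already fixes the constant to be $c_g$. What that computation really shows (and what you use later) is that $c_g$ is the \emph{only} constant for which $\psi(-2)=\psi(2)=0$, i.e.\ the necessary solvability condition; it would be clearer to present it that way.
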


Let $S:C_b^{\infty} \to C_b^{\infty}$ be a linear operator defined by $Sg = f$, where $f$ is the unique solution to (\ref{HaagerupEquation}). Also let a linear operator $T:C_b^{\infty} \to C_b^{\infty}$ be defined by $Tg = (Sf)'''$.

\begin{theorem}[\cite{HT2}, Theorem 3.5]
    For any function $g \in C_b^{\infty}$ we have that
    \begin{equation}\label{HaagerupRecurrent}
        \int g(x)\hat{p}_{u}dx = \frac{1}{2\pi}\int_2^2 g(t)\sqrt{4-t^2}dt + \frac{1}{n^2}\int [Tg](x)\hat{p}_{u}dx.
    \end{equation}
\end{theorem}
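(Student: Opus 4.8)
The plan is to combine three ingredients: the representation~(\ref{HaagerupEquation}) of an arbitrary $g\in C_b^\infty$, the $\sigma^2=1/n$ specialization of the differential equation~(\ref{GUEdiff}) of Theorem~\ref{theoremGUEdiff}, and repeated integration by parts against $\hat{p}_u$.

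First I would specialize~(\ref{GUEdiff}): with $\sigma^2=1/n$ one has $\sigma^4=1/n^2$ and $4n\sigma^2=4$, so $\hat{p}_u=p_{GUE,n,1/n}$ satisfies
\[
   \tfrac{1}{n^2}\hat{p}_u'''(x)+(4-x^2)\hat{p}_u'(x)+x\hat{p}_u(x)=0,
   \qquad\text{i.e.}\qquad
   (x^2-4)\hat{p}_u'(x)-x\hat{p}_u(x)=\tfrac{1}{n^2}\hat{p}_u'''(x).
\]
Next, given $g\in C_b^\infty$, let $f=Sg\in C_b^\infty$ be the unique solution of~(\ref{HaagerupEquation}), so that $g=c_g+(x^2-4)f'+3xf$ with $c_g:=\tfrac{1}{2\pi}\int_{-2}^{2}g(t)\sqrt{4-t^2}\,dt$. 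Multiplying by $\hat{p}_u$, integrating over $\mathbb{R}$, and using $\int_{\mathbb{R}}\hat{p}_u=1$, the statement reduces to the identity
\[
   \int_{\mathbb{R}}\bigl[(x^2-4)f'(x)+3xf(x)\bigr]\hat{p}_u(x)\,dx=\frac{1}{n^2}\int_{\mathbb{R}}f'''(x)\,\hat{p}_u(x)\,dx ,
\]
the function $f'''=(Sg)'''$ on the right being precisely $[Tg]$.

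To prove this identity I would integrate the $(x^2-4)f'$ term by parts. Since $\tfrac{d}{dx}\bigl[(x^2-4)\hat{p}_u\bigr]=2x\hat{p}_u+(x^2-4)\hat{p}_u'$, the term $-2xf\hat{p}_u$ produced by the parts formula combines with $3xf\hat{p}_u$ to leave $xf\hat{p}_u$, and the left-hand side collapses to $\int_{\mathbb{R}}f\bigl[x\hat{p}_u-(x^2-4)\hat{p}_u'\bigr]\,dx$. By the scaled differential equation this equals $-\tfrac{1}{n^2}\int_{\mathbb{R}}f\,\hat{p}_u'''\,dx$, and three further integrations by parts move the three derivatives onto $f$, turning it into $\tfrac{1}{n^2}\int_{\mathbb{R}}f'''\,\hat{p}_u\,dx$, which is exactly the right-hand side.

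The only point requiring genuine care --- and, to my mind, the sole real obstacle --- is the vanishing of the four boundary terms at $\pm\infty$ together with the absolute convergence of every integral involved. Both follow at once from the explicit form of $\hat{p}_u$ as a finite sum of products $\phi_j\phi_k$ of rescaled Hermite functions: $\hat{p}_u$ and all of its derivatives decay like a polynomial times a Gaussian, whereas $f,f',f'',f'''$ are bounded because $f\in C_b^\infty$; hence every integrand, and every boundary expression, is a bounded function times a rapidly decreasing one. With this in hand the computation above is routine.
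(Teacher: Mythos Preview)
Your proof is correct and follows essentially the same route as the paper's: both hinge on the $\sigma^2=1/n$ case of~(\ref{GUEdiff}) combined with integration by parts against $f=Sg$, together with~(\ref{HaagerupEquation}) and $\int\hat p_u=1$. The only difference is bookkeeping---the paper multiplies the differential equation by $f$ and integrates by parts in one sweep, whereas you start from the representation of $g$, reduce to the key identity, and then split the integrations by parts into a ``one plus three'' pattern---and you are more explicit than the paper about why the boundary terms vanish.
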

\begin{proof}
    Multiply (\ref{GUEdiff}) for $\sigma^2 = 1/n$ by $f \in C_b^{\infty}$ and integrate over the real line:
    \begin{equation*}
        \int f(x)\left(\frac{1}{n^2}\hat{p}_{u}'''(x) + (4 - x^2)\hat{p}_{u}'(x) + x \hat{p}_{u}(x)\right)dx = 0.
    \end{equation*}
    Integrating by parts we obtain
    \begin{equation*}
        \frac{1}{n^2}\int f'''(x)\hat{p}_{u}dx = \int \left((x^2 - 4)f'(x)+3xf(x)\right)\hat{p}_{u}dx,
    \end{equation*}
    then, using that $p_{u}$ is a probability measure and (\ref{HaagerupEquation}), we have (\ref{HaagerupRecurrent}).
\end{proof}

Since $T^k g$ is bounded and $\hat{p}_u$ is a probability measure, we easily get
\begin{corollary}[\cite{HT2}, Corollary 3.6]
    For any $k \in \mathbb{N}$ and any function $g \in C_b^{\infty}$ we have
    \begin{equation}\label{GUEasymp}
        \int g(x)\hat{p}_{u}dx = \frac{1}{2\pi}\sum_{j=0}^{k-1}\frac{1}{n^{2j}}\int_2^2 [T^j g](t)\sqrt{4-t^2}dt + O(n^{-2k}).
    \end{equation}
\end{corollary}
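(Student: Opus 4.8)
The plan is to unroll the recurrence (\ref{HaagerupRecurrent}) exactly $k$ times and then estimate the remaining tail term. The structural fact that makes this work is that $T$ maps $C_b^{\infty}$ into $C_b^{\infty}$, so every iterate $T^{j}g$ again lies in $C_b^{\infty}$ and (\ref{HaagerupRecurrent}) may legitimately be applied with $g$ replaced by $T^{j}g$ for each $j\ge 0$.

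Concretely, I would proceed by induction on $k$, keeping the tail term exact (not yet absorbed into $O(\cdot)$) at every stage. The base case $k=1$ is just (\ref{HaagerupRecurrent}), written as
\[
  \int g\,\hat p_{u}\,dx = \frac{1}{2\pi}\int_{-2}^{2}[T^{0}g](t)\sqrt{4-t^{2}}\,dt + \frac{1}{n^{2}}\int [Tg](x)\,\hat p_{u}\,dx .
\]
For the inductive step, assume the statement with $k$ replaced by $k-1$ but with its last term kept in the exact form $\frac{1}{n^{2(k-1)}}\int [T^{k-1}g](x)\,\hat p_{u}\,dx$ rather than $O(n^{-2(k-1)})$. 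Applying (\ref{HaagerupRecurrent}) to the function $T^{k-1}g\in C_b^{\infty}$ rewrites this integral as $\frac{1}{2\pi}\int_{-2}^{2}[T^{k-1}g](t)\sqrt{4-t^{2}}\,dt + \frac{1}{n^{2}}\int [T^{k}g](x)\,\hat p_{u}\,dx$. Substituting this in and folding the new boundary term into the sum as the $j=k-1$ summand yields exactly the claimed identity, now with remainder $R_{k}:=\frac{1}{n^{2k}}\int [T^{k}g](x)\,\hat p_{u}\,dx$.

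It then remains to check $R_{k}=O(n^{-2k})$. Since $\hat p_{u}=p_{GUE,n,1/n}$ is a probability density, $\lvert R_{k}\rvert\le n^{-2k}\,\lVert T^{k}g\rVert_{\infty}$. The operator $T$, hence $T^{k}$, is built purely from the solution operator $S$ of (\ref{HaagerupEquation}), which does not depend on $n$; therefore $\lVert T^{k}g\rVert_{\infty}$ is a finite constant depending only on $g$ and $k$, so the bound is uniform in $n$.

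I do not expect any genuine obstacle here: the content is a short finite induction together with a one-line estimate. The only point that deserves a moment's care is that the constant hidden in $O(n^{-2k})$ really is independent of $n$ — which is precisely why one works in $C_b^{\infty}$ and invokes the $C_b^{\infty}\to C_b^{\infty}$ mapping property of $S$ from Proposition 2.4 of \cite{HT2}, rather than merely the $C^{\infty}\to C^{\infty}$ existence statement.
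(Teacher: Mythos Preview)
Your proposal is correct and matches the paper's own argument essentially verbatim: iterate (\ref{HaagerupRecurrent}) $k$ times and bound the remaining integral by $n^{-2k}\lVert T^{k}g\rVert_{\infty}$ using that $T^{k}g\in C_b^{\infty}$ is bounded and $\hat p_{u}$ is a probability measure. The paper compresses this into a single sentence, but the content is identical.
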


O. Kopelevitch proved \cite{K} that the expansion \eqref{GUEasymp} is convergent for certain class of entire functions. 

\begin{theorem}[\cite{K}, Theorem 1]\label{KopelevitchTheorem}
    Let $g$ be an entire function of order two and finite type $\sigma_g$. Then for GUE random matrix $X_n$ of order $n \ge n_{u}(\sigma_g)$, one has the following convergent expansion:
    \begin{equation}
        \frac{1}{n}\mathbb{E}\{\Tr(g(X_n))\} = \int g(x)\hat{p}_{u}dx = \frac{1}{2\pi}\sum_{j=0}^{\infty}\frac{1}{n^{2j}}\int_2^2 [T^j g](t)\sqrt{4-t^2}dt.
    \end{equation}
    
    For an entire function $g$ of order less than 2, the expansion is convergent for any $n \ge 1$. 
\end{theorem}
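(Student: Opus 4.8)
The plan is to push the exact one‑step relation \eqref{HaagerupRecurrent} to the limit. Iterating it $k$ times gives, for every $k\ge 1$,
\[
  \int g\,\hat p_{u}\,dx=\frac{1}{2\pi}\sum_{j=0}^{k-1}\frac{1}{n^{2j}}\int_{-2}^{2}[T^{j}g](t)\sqrt{4-t^{2}}\,dt+\frac{1}{n^{2k}}\int[T^{k}g]\,\hat p_{u}\,dx ,
\]
so the theorem is equivalent to showing that the remainder $R_{k}:=n^{-2k}\int[T^{k}g]\hat p_{u}\,dx$ tends to $0$ as $k\to\infty$ (the left‑hand side equals $\tfrac1n\mathbb{E}\,\Tr(g(X_{n}))$ by the very definition of the mean density, $X_{n}$ being GUE normalized so that its limiting spectral law is the semicircle on $[-2,2]$). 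Since every functional in sight is determined by its values on the monomials, I would first reduce to $g(x)=x^{m}$ and then reassemble. Comparing top‑degree coefficients in \eqref{HaagerupEquation} shows that $S$ lowers the degree of a polynomial by $1$, hence $T$ lowers it by $4$; thus $T^{j}x^{m}=0$ as soon as $4j>m$, so for polynomial $g$ the series terminates and the identity above is exact.

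Next I would match the coefficients of the expansion with the topological expansion of the GUE moments. Put $\varepsilon_{j}(m):=\frac{1}{2\pi}\int_{-2}^{2}[T^{j}x^{m}](t)\sqrt{4-t^{2}}\,dt$. Applying the displayed identity to $g=x^{m}$ with $k>m/4$ yields the identity of polynomials in $1/n^{2}$,
\[
  \int x^{m}\,\hat p_{u}\,dx=\sum_{j\ge 0}\frac{\varepsilon_{j}(m)}{n^{2j}} ,
\]
and comparing with the standard genus expansion of $\tfrac1n\mathbb{E}\,\Tr(X_{n}^{m})$ (Wick's theorem / Harer--Zagier) identifies $\varepsilon_{j}(m)$ with a count of genus‑$j$ gluings of an $m$‑gon; in particular $\varepsilon_{j}(m)\ge 0$, and $\varepsilon_{j}(m)=0$ unless $m$ is even. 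Hence, writing $g=\sum_{m}a_{m}x^{m}$, all terms of $\sum_{m}\sum_{j}|a_{m}|\varepsilon_{j}(m)n^{-2j}$ are nonnegative and the sum equals $\sum_{m}|a_{m}|\int x^{m}\hat p_{u}\,dx$. So the whole theorem reduces to the single bound $\sum_{m}|a_{m}|\int x^{m}\hat p_{u}\,dx<\infty$: granted this, Fubini--Tonelli lets one interchange the summations over $m$ and $j$ in $\int g\,\hat p_{u}\,dx=\sum_{m}a_{m}\int x^{m}\hat p_{u}\,dx$, which produces exactly $\sum_{j}n^{-2j}\sum_{m}a_{m}\varepsilon_{j}(m)=\frac{1}{2\pi}\sum_{j}n^{-2j}\int_{-2}^{2}[T^{j}g]\sqrt{4-t^{2}}\,dt$, and in particular forces $R_{k}\to 0$.

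The analytic heart is therefore an estimate on the even moments $\int x^{2p}\hat p_{u}\,dx$. I would bound $\tfrac1n\Tr(X_{n}^{2p})\le\|X_{n}\|^{2p}$ and invoke a non‑asymptotic operator‑norm bound for an $n\times n$ GUE matrix of variance $1/n$, namely $\mathbb{E}\,\|X_{n}\|^{2p}\le c_{1}^{2p}\bigl(1+(p/(c_{2}n))^{p}\bigr)$ with numerical constants $c_{1},c_{2}$ (this follows from Gaussian concentration of the largest eigenvalue together with its quadratic upper tail $\mathbb{P}(\|X_{n}\|>u)\lesssim e^{-cnu^{2}}$ at large $u$, or from bounds on the Harer--Zagier numbers). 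On the other hand, an entire function of order $\le 2$ and type $\le\sigma_{g}$ has, for every $\delta>0$ and all large $p$, Taylor coefficients with $|a_{2p}|\le\bigl((2e\sigma_{g}+\delta)/(2p)\bigr)^{p}$. Multiplying the two bounds, $|a_{2p}|\int x^{2p}\hat p_{u}\,dx\le\bigl(C(\sigma_{g})/p\bigr)^{p}+\bigl(C'(\sigma_{g})/n\bigr)^{p}$: the first term is summable over $p$ unconditionally, and the second as soon as $n>C'(\sigma_{g})=:n_{u}(\sigma_{g})$; moreover when $g$ has order strictly less than $2$ the coefficients $|a_{2p}|$ decay faster than any geometric rate, so the series converges for every $n\ge 1$. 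That completes the reduction. The one real obstacle is this last step: one needs a moment (equivalently, largest‑eigenvalue) bound that is sharp simultaneously in $p$ and in $n$, because a crude estimate such as $\int x^{2p}\hat p_{u}\,dx\le(2p-1)!!$ gives convergence only for type $<1/2$ and no honest threshold $n_{u}(\sigma_{g})$; a side point is to check that $\int g\,\hat p_{u}\,dx$ and each $\int_{-2}^{2}[T^{j}g]\sqrt{4-t^{2}}\,dt$ are meaningful for entire $g$ of order $2$, which again follows from the Gaussian decay of $\hat p_{u}$ once $n>n_{u}(\sigma_{g})$.
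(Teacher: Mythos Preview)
The paper does not prove this theorem at all: it is quoted verbatim from Kopelevitch~\cite{K} and then used as a black box in the proofs of Theorems~\ref{thm:GSE-conv} and~\ref{thm:GOE-conv}. So there is no argument in the present paper to compare your proposal against.

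That said, your sketch is essentially the architecture of Kopelevitch's own proof in~\cite{K}. The three structural observations you make --- that $T$ lowers polynomial degree by four so the expansion terminates on monomials, that the resulting coefficients $\varepsilon_{j}(m)$ coincide with the Harer--Zagier genus counts and are therefore nonnegative, and that nonnegativity together with Tonelli reduces the whole question to the single estimate $\sum_{m}|a_{m}|\int x^{m}\hat p_{u}\,dx<\infty$ --- are exactly the backbone of~\cite{K}. Your diagnosis of the ``one real obstacle'' is also accurate: the two-parameter moment bound sharp in both $p$ and $n$ is where the work lies. In~\cite{K} it is extracted directly from the explicit Harer--Zagier recursion for the GUE moments rather than from operator-norm concentration as you propose, which gives cleaner constants and a clean threshold $n_{u}(\sigma_{g})$; your concentration route should also go through but with more bookkeeping. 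The sketch is sound, with the honest caveat you already flagged: the moment estimate is asserted rather than carried out.
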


We will prove similar theorems for GSE and GOE cases. Our results work for the same set of functions, but with additional restrictions on the set of $n$'s even for functions of order less than 2. 

\begin{theorem}\label{thm:GSE-conv}
    Let $g$ be an entire function of order $\rho_g\le 2$ and type $\sigma_g$ with $\sigma_g<+\infty$ if $\rho_g=2$. Then for GSE random matrix $X_n$ of order $n \ge n_{s}(\rho_g,\sigma_g)$, one has the following convergent expansion:
    \begin{equation}
        \int g(x)\hat{p}_{s}dx = \frac{3}{4\pi}\sum_{j=0}^{\infty}\frac{1}{n^{2j}}\int_2^2 \biggl[T^j \biggl(\frac{1}{n}f'-xf\biggr)\biggr](t)\sqrt{4-t^2}dt,
    \end{equation}
    where $f$ is any solution of the differential equation
    \begin{equation}\label{DiffGSE}
        g(x) = -\frac{4}{n^2}f'''(x) + \left(x^2 - \frac{8n+2}{n}\right)f'(x).
    \end{equation}
\end{theorem}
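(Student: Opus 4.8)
The plan is to reduce Theorem~\ref{thm:GSE-conv} to Kopelevitch's Theorem~\ref{KopelevitchTheorem} by pairing the density differential equation of Proposition~\ref{theoremGSEdiff} against a solution $f$ of~\eqref{DiffGSE}, in the same spirit as the proof of the GUE recurrence~\eqref{HaagerupRecurrent}. Observe first that~\eqref{DiffGSE}, written in the unknown $w=f'$, is the second-order linear equation $-\tfrac{4}{n^2}w''+\bigl(x^2-\tfrac{8n+2}{n}\bigr)w=g$ — a rescaled inhomogeneous parabolic-cylinder (Weber) equation with entire forcing — so it has entire solutions $w$, and $f(x)=\int_0^x w$ is then an entire solution of~\eqref{DiffGSE}.

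Next I would carry out the pairing. Set $\sigma^2=1/n$ in~\eqref{GSEdiff}, so that the scaled density $\hat p_s=p_{GSE,n,1/n}$ appears together with the scaled GUE density $\hat p_u$ on the right-hand side; multiply the identity by $f$ and integrate over $\mathbb{R}$. Every boundary term produced by integrating by parts vanishes, because $\hat p_s$ and $\hat p_u$ are finite sums of squared Hermite functions together with the term $\tau_{2n+1}$ from~\eqref{Taufunction}, and all of these, with all their derivatives, decay faster than any polynomial. Transferring the three derivatives off $\hat p_s$ and onto $f$, the left-hand side collapses — by the defining property of~\eqref{DiffGSE} — to $\int g\,\hat p_s\,dx$, while one integration by parts on the right turns $-\tfrac{3}{n}\hat p_u'-3x\hat p_u$ paired with $f$ into a constant multiple of $\int\bigl(\tfrac1n f'-xf\bigr)\hat p_u\,dx$. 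Setting $h:=\tfrac1n f'-xf$, this yields an identity
\begin{equation*}
  \int g\,\hat p_s\,dx=c\int h\,\hat p_u\,dx
\end{equation*}
for an explicit constant $c$ (which I would track so that the final prefactor comes out to $\tfrac{3}{4\pi}$), valid for \emph{every} solution $f$ of~\eqref{DiffGSE}; in particular $\int h\,\hat p_u\,dx$ does not depend on the choice of $f$, since the left-hand side does not.

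It then remains to apply Theorem~\ref{KopelevitchTheorem} to the entire function $h$, which gives $\int h\,\hat p_u\,dx=\tfrac1{2\pi}\sum_{j\ge0}n^{-2j}\int_{-2}^{2}[T^j h](t)\sqrt{4-t^2}\,dt$; substituting this into the identity above and collecting constants yields the claimed convergent series. For this I must check that $h$ is entire of order $\le 2$, and of finite type when $\rho_g=2$, and then see how large $n$ has to be so that Kopelevitch's admissibility hypothesis holds for $h$ — this is the origin of the threshold $n_s(\rho_g,\sigma_g)$.

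I expect this last point to be the main obstacle. The homogeneous solutions of the $w$-equation are, after rescaling, parabolic-cylinder functions: entire of order exactly $2$, but of type growing linearly in $n$ (of order $n$); a minimal-growth particular solution, on the other hand, satisfies $w_p(x)\sim g(x)/x^2$ as $|x|\to\infty$, hence inherits the order and type of $g$. Thus any solution $f$, and with it $h$, is entire of order $\le 2$ and type $\lesssim\max(\sigma_g,n)$, and plugging this bound into the convergence condition of Theorem~\ref{KopelevitchTheorem} forces a lower bound $n\ge n_s(\rho_g,\sigma_g)$ — one that stays nontrivial even for $\rho_g<2$, precisely because the parabolic-cylinder homogeneous part of $f$ is present for every choice of $f$. (Alternatively one may select the minimal-growth solution $f$, whose type is $\le\sigma_g$, and then recover the case of an arbitrary solution from the $f$-independence noted above.) Apart from this growth estimate for the inhomogeneous Weber equation, uniform in $n$, everything is the same integration-by-parts bookkeeping as in the GUE case.
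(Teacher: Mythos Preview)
Your plan is exactly the paper's: pair~\eqref{GSEdiff} at $\sigma^2=1/n$ against a solution $f$ of~\eqref{DiffGSE}, integrate by parts to obtain $\int g\,\hat p_s=\tfrac{3}{2}\int\bigl(\tfrac1n f'-xf\bigr)\,\hat p_u$, and then feed $h:=\tfrac1n f'-xf$ into Theorem~\ref{KopelevitchTheorem}.

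One point needs correction. Your justification that the boundary terms vanish because the densities ``decay faster than any polynomial'' is not sufficient: $f$ is entire of order~$2$ and may grow like $e^{\tau x^2}$, which mere super-polynomial decay cannot absorb. The paper's argument here is that $\hat p_s$ (built from Hermite functions evaluated at $x\sqrt{n/2}$) and its derivatives carry Gaussian decay $e^{-c\,n\,x^2}$ whose rate is proportional to $n$; for $n$ large relative to the type of $f$ this kills the boundary terms. So the integration-by-parts step is itself a source of the threshold $n_s$, not only the admissibility of $h$ in Kopelevitch's theorem. As for controlling the growth of $f$, the paper does not carry out your parabolic-cylinder analysis: it simply cites a theorem of Heittokangas--Korhonen--R\"atty\"a to the effect that every entire solution of~\eqref{DiffGSE} has order~$\le 2$ and finite type, and then notes that order and type are stable under differentiation and multiplication by polynomials, whence $h$ satisfies the hypotheses of Theorem~\ref{KopelevitchTheorem}. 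Your more detailed Weber-equation discussion is not wrong, and your remark that the homogeneous solutions have type of order~$n$ does explain why a nontrivial $n_s$ survives even when $\rho_g<2$, but for the proof itself the black-box citation suffices.
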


\begin{theorem}\label{thm:GOE-conv}
    Let $g$ be an entire function of order $\rho_g\le 2$ and type $\sigma_g$ with $\sigma_g<+\infty$ if $\rho_g=2$. Then for GOE random matrix $X_n$ of order $n \ge n_{o}(\rho_g,\sigma_g)$, one has the following convergent expansion:
    \begin{equation}
        \int g(x)\hat{p}_{o}dx = \frac{3}{4\pi}\sum_{j=0}^{\infty}\frac{1}{n^{2j}}\int_2^2 \biggl[T^j \biggl(\frac{1}{n}f'-xf\biggr)\biggr](t)\sqrt{4-t^2}dt,
    \end{equation}
    where $f$ is any solution of the differential equation
    \begin{equation}\label{DiffGOE}
        g(x) = -\frac{4}{n^2}f'''(x) + \left(x^2 - \frac{4n-2}{n}\right)f'(x).
    \end{equation}
\end{theorem}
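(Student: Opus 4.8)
\emph{Proof plan.} I would reduce the statement to Kopelevitch's GUE theorem (Theorem~\ref{KopelevitchTheorem}), exactly as \eqref{HaagerupRecurrent} was obtained from the GUE density equation \eqref{GUEdiff}, but now starting from the GOE density equation of Proposition~\ref{theoremGOEdiff}.

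\textbf{Reduction step.} Specialize Proposition~\ref{theoremGOEdiff} to $\sigma^2=1/n$:
\[
\frac{4}{n^2}\hat p_o'''+\Bigl(\frac{4n-2}{n}-x^2\Bigr)\hat p_o'-2x\hat p_o=-\frac{3}{n}\hat p_u'-3x\hat p_u .
\]
Let $f$ be a solution of \eqref{DiffGOE}; multiply this identity by $f$, integrate over $\mathbb R$, and integrate by parts (three times on the left, once on the right). Provided the boundary terms vanish, the left-hand side becomes $\int\bigl[-\tfrac{4}{n^2}f'''+(x^2-\tfrac{4n-2}{n})f'\bigr]\hat p_o\,dx$, the two contributions $\pm2xf\hat p_o$ cancelling, which equals $\int g\,\hat p_o\,dx$ by \eqref{DiffGOE}; the right-hand side becomes $3\int\bigl(\tfrac1n f'-xf\bigr)\hat p_u\,dx$. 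Hence, writing $h:=\tfrac1n f'-xf$, one gets $\int g\,\hat p_o\,dx=3\int h\,\hat p_u\,dx$, and applying Theorem~\ref{KopelevitchTheorem} to $h$ (with $\tfrac1{2\pi}\int_{-2}^{2}\sqrt{4-t^2}\,dt=1$) produces the claimed expansion.

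\textbf{Admissibility of $h$, and independence of the choice of $f$.} To invoke Theorem~\ref{KopelevitchTheorem} one needs $h$ entire of order $\le2$, of finite type if of order $2$, with type below the convergence threshold for the given $n$. Setting $u:=f'$, \eqref{DiffGOE} becomes $u''-\bigl(\tfrac{n^2}{4}x^2-n^2+\tfrac n2\bigr)u=-\tfrac{n^2}{4}g$; its leading coefficient is a nonzero constant, so every solution is entire, and comparison with the Hermite identity $\phi_k''=(x^2-2k-1)\phi_k$ identifies the homogeneous solutions as a rescaling of $\phi_{n-1}\bigl(\sqrt{n/2}\,x\bigr)$, which decays like $x^{n-1}e^{-nx^2/4}$, and of a second parabolic-cylinder solution of the same index, which grows like $e^{+nx^2/4}$. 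A particular solution produced by variation of parameters can be taken entire of order $\le2$ with type controlled by $\rho_g$, $\sigma_g$, $n$, and then $f=\int u$ and $h$ inherit these estimates. The expansion is independent of the chosen solution $f$: two solutions differ by a homogeneous solution $v$, and running the reduction step with $g\equiv0$ forces $\int(\tfrac1n v'-xv)\hat p_u\,dx=0$, so the corresponding $T^j$-series vanishes by Theorem~\ref{KopelevitchTheorem}.

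\textbf{Main obstacle.} The crux is the quantitative growth bookkeeping. One must first verify that the integration-by-parts boundary terms genuinely vanish: $\hat p_o,\hat p_o',\hat p_o''$ decay like $x^{n-1}e^{-nx^2/4}$ (this rate being set by the $\phi_{n-1}(\sqrt{n/2}\,x)$ factor entering $\hat p_o$ through \eqref{GOEdensity}), so $f,f',f''$ must not outgrow it --- this restricts the admissible $f$ and is where the bound on $n$ originates. Second, one must locate the type of $h$ precisely and match it against the convergence threshold of Theorem~\ref{KopelevitchTheorem}, thereby extracting $n_o(\rho_g,\sigma_g)$. Because the parabolic-cylinder structure forces the relevant solutions to have order exactly $2$ in general, an entire $g$ of order $<2$ still produces an $h$ of order $2$; this explains why, in contrast with the pure GUE case, a lower bound on $n$ is needed even for $g$ of order $<2$. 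Working out the explicit dependence of $n_o$ on $\rho_g,\sigma_g$ from the type estimate for $h$ and Kopelevitch's constant is the main remaining work.
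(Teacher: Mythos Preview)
Your reduction is exactly the paper's: multiply \eqref{GOEdiff} at $\sigma^2=1/n$ by $f$, integrate by parts, and feed $h=\tfrac1n f'-xf$ into Kopelevitch's GUE theorem. The one methodological difference is in the growth control on $f$: the paper simply cites a general result of Heittokangas--Korhonen--R\"atty\"a on solutions of inhomogeneous linear ODEs with entire coefficients to assert that any solution of \eqref{DiffGOE} stays of order $\le 2$ and finite type, whereas you propose to identify the homogeneous solutions explicitly as rescaled parabolic-cylinder functions $\phi_{n-1}(\sqrt{n/2}\,x)$ and to control a particular solution by variation of parameters. Your more explicit route buys you the explanation of why a lower bound on $n$ persists even when $g$ has order $<2$, and the independence-of-$f$ discussion, neither of which the paper addresses. (One bookkeeping point: your integration by parts gives a factor $3$ in front of $\int h\,\hat p_u$, while the paper records $3/2$ and hence $3/(4\pi)$ in the statement; this discrepancy is internal to the paper and does not affect the argument.)
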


We will prove these theorems reducing them to Theorem~\ref{KopelevitchTheorem} by Kopelevitch.
\begin{proof}[Proof of Theorem~\ref{thm:GSE-conv}.] Multiply (\ref{GSEdiff}) for $\sigma^2 = 1/n$ by an entire function $f$ of order $\rho_f \leq 2$ and type $\sigma_f$ with $\sigma_f < \infty$ if $\rho_g = 2$, and integrate over the real line:
    \begin{equation}\label{stepDiff1}
        \int f(x)\left(\frac{4}{n^2} \hat{p}_{s}''' + \left(\frac{8n+2}{n}- x^2\right)\hat{p}_{s}' - 2x \hat{p}_{s}\right)dx = -\frac{3}{2}\int f(x)\left(\frac{1}{n}\hat{p}_{u}+x \hat{p}_{u}\right)dx.
    \end{equation}
As it was shown by  J.~Heittokangas,  R.~Korhonen, and J.~Rättyä~\cite{HKR}, any solution of the differential equation \eqref{DiffGSE} with an entire function $g$ of order less or equal to 2 and of a finite type, is also an entire function of order not more than 2 and of a finite type.

Because of the conditions on $f$, if $n$ is large enough, when integrating by parts, the terms outside of the integrals vanish, because the argument in exponents in $p_s(x)$ grows as $n$ grows. So we obtain
    \begin{equation}\label{stepDiff2}
        \int \left(-\frac{4}{n^2}f''' + \left(x^2 - \frac{8n+2}{n}\right)f'\right)\hat{p}_{o}dx = \frac{3}{2}\int \left(\frac{1}{n}f'-x f\right)\hat{p}_{u}dx.
    \end{equation}

    Now, using (\ref{DiffGSE}), (\ref{GUEasymp}) and the fact that order and type are invariant under taking derivatives and multiplying by constants and $x^n$, one can see that the function $\frac{1}{n}f' - x f$
    satisfies all conditions of Kopelevitch's theorem.
\end{proof}
\begin{proof}
    Proof of Theorem~\ref{thm:GOE-conv} is exactly the same with following formulas instead of \eqref{stepDiff1} and \eqref{stepDiff2} 
    \begin{equation}
            \int f(x)\left(\frac{4}{n^2} \hat{p}_{s}''' + \left(\frac{8n+2}{n}- x^2\right)\hat{p}_{s}' - 2x \hat{p}_{s}\right)dx = -\frac{3}{2}\int f(x)\left(\frac{1}{n}\hat{p}_{u}+x \hat{p}_{u}\right)dx.
        \end{equation}
    \begin{equation}
            \int \left(-\frac{4}{n^2}f''' + \left(x^2 - \frac{8n+2}{n}\right)f'\right)\hat{p}_{o}dx = \frac{3}{2}\int \left(\frac{1}{n}f'-x f\right)\hat{p}_{u}dx.\qedhere
    \end{equation}
\end{proof}

\section{Differential equations on moment generating functions}\label{SectionDifferentialMoments}

Here we derive differential equations on the moment generating functions for Gaussian Ensembles. 
These results are due to Ledoux \cite{L}, but we obtain them by a different technique.

Let us denote the moment generating function (equivalently, the bilateral Laplace transform) as
\begin{equation*}
    (\mathcal{L}\mu)(s) = \int_{\mathbb{R}}\exp(s x)\mu(x)dx.
\end{equation*}

\subsection{GUE case}
\begin{theorem}
The moment generating function of GUE (\ref{GUEdensity}) satisfies the differential equation
\begin{equation}
    s(\mathcal{L}p_u)''(s) - (\mathcal{L}p_u)'(s) - s(\sigma^4 s^2 + 4n\sigma^2)(\mathcal{L}p_u)(s) = 0.
\end{equation}    
\end{theorem}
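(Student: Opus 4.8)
The plan is to obtain the equation by applying the transform $\mathcal{L}$ directly to the third-order differential equation for the density furnished by Theorem~\ref{theoremGUEdiff}. Write $F(s) = (\mathcal{L}p_u)(s) = \int_{\mathbb{R}} e^{sx}p_{GUE,n,\sigma^2}(x)\,dx$. One may, if desired, first reduce to the case $\sigma^2 = 1/2$ by the substitution $x\mapsto\sqrt{2}\sigma x$ exactly as in the proof of Theorem~\ref{theoremGUEdiff} — this sends $(\mathcal{L}p_{GUE,n,\sigma^2})(s)$ to $(\mathcal{L}p_{GUE,n,1/2})(\sqrt{2}\sigma s)$, so the general statement follows from one value of $\sigma$ by rescaling — but it is just as quick to work with general $\sigma$ throughout.

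The mechanics are the standard operational rules for the transform with kernel $e^{sx}$: integrating by parts $k$ times gives $\mathcal{L}[p_u^{(k)}](s) = (-s)^k F(s)$, and differentiating under the integral sign gives $\mathcal{L}[x^j h](s) = \frac{d^j}{ds^j}(\mathcal{L}h)(s)$; combining the two, $\mathcal{L}[x^2 p_u'](s) = \frac{d^2}{ds^2}\bigl(-sF(s)\bigr) = -sF''(s) - 2F'(s)$. Substituting these into $\sigma^4 p_u''' + (4n\sigma^2 - x^2)p_u' + x p_u = 0$ term by term and collecting the derivatives of $F$ against the powers of $s$ is then a one-line computation yielding the asserted second-order linear equation for $F$.

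The only step that is not pure bookkeeping — and hence the main point to address — is the analytic justification of these manipulations: that $F$ is well defined, that it may be differentiated under the integral sign to all orders, and that every boundary term produced by the integrations by parts vanishes at $\pm\infty$. All of this is immediate from the shape of $p_u$: being a constant multiple of a finite sum of squares of (rescaled) Hermite functions, $p_u(x)$ and each of its derivatives are dominated by a polynomial times $e^{-x^2/(2\sigma^2)}$, which decays faster than $e^{sx}$ for every $s\in\mathbb{C}$. Consequently the defining integral converges absolutely and locally uniformly in $s$, so $F$ is in fact entire in $s$, differentiation under the integral is legitimate to all orders, and the boundary terms are zero. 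I do not expect any genuine difficulty beyond this; I would also note, as a byproduct, that the resulting equation is, after the obvious substitution, a confluent hypergeometric equation, so solving it recovers the known closed-form expression for $\mathcal{L}p_u$.
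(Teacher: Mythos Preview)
Your approach is exactly the paper's: multiply the density equation \eqref{GUEdiff} by $e^{sx}$, integrate over $\mathbb{R}$, and integrate by parts, with the Gaussian decay of $p_u$ furnishing the analytic justification you spell out more carefully than the paper does. One caveat worth flagging before you do the ``one-line computation'': carried out correctly it yields
\[
s(\mathcal{L}p_u)'' + 3(\mathcal{L}p_u)' - s(\sigma^4 s^2 + 4n\sigma^2)(\mathcal{L}p_u)=0,
\]
not the displayed equation with $-(\mathcal{L}p_u)'$; the coefficient in the statement (and in the paper's intermediate line) is a typo, as you can check against the explicit formula \eqref{GUEgen} for $n=1$ or against the Harer--Zagier recursion.
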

\begin{proof}
    Multiply (\ref{GUEdiff}) by $\exp(s x)$ and integrate over the real line:
    \begin{equation*}
        \int \exp(s x)\left(\sigma^4p_{u}'''(x) + (4n\sigma^2 - x^2)p_{u}'(x) + x p_{u}(x)\right)dx = 0.
    \end{equation*}
    Integrating by parts we obtain
    \begin{equation*}
         s\int x^2\exp(s x)p_{s}dx - \int x\exp(s x)p_{s}dx + s(\sigma^4 s^2 + 4n\sigma^2)\int \exp(s x)p_{s}dx = 0,
    \end{equation*}
    \begin{equation*}
        s(\mathcal{L}p_u)''(s) - (\mathcal{L}p_u)'(s) - s(\sigma^4 s^2 + 4n\sigma^2)(\mathcal{L}p_u)(s) = 0.\qedhere
    \end{equation*}
\end{proof}
\subsection{GSE case}
\begin{theorem}
The moment generating function of GSE (\ref{GSEdensity}) satisfies the differential equation
\begin{equation}
    s(\mathcal{L}p_s)''(s) - s(4\sigma^4 s^2 + \sigma^2(8n+2))(\mathcal{L}p_s)(s) = -3(\mathcal{L}p_u)'(s) + 3\sigma^2 s (\mathcal{L}p_u)(s).
\end{equation}    
\end{theorem}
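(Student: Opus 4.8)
The plan is to mimic exactly the GUE argument given in the previous subsection, now starting from the density-level differential equation for GSE established in Proposition~\ref{theoremGSEdiff}. First I would take equation~\eqref{GSEdiff}, multiply both sides by $\exp(sx)$, and integrate over $\mathbb{R}$; since $p_{GSE,n,\sigma^2}$ decays like a Gaussian (it is a sum of squares of Hermite functions plus the $\tau$-term, all of which have Gaussian tails), the integrals converge absolutely for every real $s$ and the boundary terms arising from integration by parts vanish. The key is then to convert each term on the left side into derivatives of $\mathcal{L}p_s$: the term $4\sigma^4 p_s'''$ contributes $4\sigma^4 s^3 (\mathcal{L}p_s)(s)$; the term $-x^2 p_s'$ contributes, after two integrations by parts against $x^2 e^{sx}$, a combination producing $(\mathcal{L}p_s)''(s)\cdot(-s)$ plus lower-order pieces; the term $\sigma^2(8n+2)p_s'$ gives $-\sigma^2(8n+2)s(\mathcal{L}p_s)(s)$; and $-2x p_s$ gives $-2(\mathcal{L}p_s)'(s)$. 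On the right side, $-3\sigma^2 p_{GUE}'$ and $-3x p_{GUE}$ transform to $-3\sigma^2 s(\mathcal{L}p_u)(s)$ — wait, with the sign conventions one gets $+3\sigma^2 s(\mathcal{L}p_u)(s)$ from $-3\sigma^2 p_u'$ after integration by parts — and $-3(\mathcal{L}p_u)'(s)$ respectively.

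The main computational step, and the one I expect to require the most care, is handling the $-x^2 p_s'(x)$ term, since multiplying by $e^{sx}$ and recognizing the result as $(\mathcal{L}p_s)''$ requires tracking the extra terms that appear when differentiating $\int x^2 e^{sx} p_s\,dx$ twice in $s$ versus $\int e^{sx} x^2 p_s'\,dx$. Concretely, $\int e^{sx} x^2 p_s'\,dx = -\int \frac{d}{dx}(x^2 e^{sx})\,p_s\,dx = -\int (2x + sx^2)e^{sx}p_s\,dx = -2(\mathcal{L}p_s)'(s) - s(\mathcal{L}p_s)''(s)$. The $-2(\mathcal{L}p_s)'(s)$ piece here exactly cancels against the $-2x p_s$ term's contribution $-2(\mathcal{L}p_s)'(s)$? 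No — let me instead just let the bookkeeping fall out: collecting all contributions, the $(\mathcal{L}p_s)'$ terms combine (the $-2x p_s$ term gives $-2(\mathcal{L}p_s)'$ and the $-x^2 p_s'$ term gives $+2(\mathcal{L}p_s)'$ after the sign from integration by parts, so they cancel), leaving precisely $s(\mathcal{L}p_s)''(s) - s(4\sigma^4 s^2 + \sigma^2(8n+2))(\mathcal{L}p_s)(s)$ on the left, as claimed.

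I would organize the write-up as: (1) state that Gaussian decay justifies all manipulations; (2) display the equation obtained by multiplying \eqref{GSEdiff} by $e^{sx}$ and integrating; (3) list the integration-by-parts identities $\int e^{sx}p_s'''\,dx = -s^3(\mathcal{L}p_s)(s)$ wait the sign — three integrations by parts give $(-1)^3 s^3 = -s^3$, so $\int e^{sx}p_s'''\,dx = -s^3(\mathcal{L}p_s)$, hmm but then $4\sigma^4$ times that is $-4\sigma^4 s^3(\mathcal{L}p_s)$, which has the wrong sign versus the claimed $-s\cdot 4\sigma^4 s^2(\mathcal{L}p_s) = -4\sigma^4 s^3 (\mathcal{L}p_s)$ — actually that matches. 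Good. So: $\int e^{sx}p_s'''\,dx = -s^3(\mathcal{L}p_s)(s)$ after absorbing signs appropriately (one checks $\int e^{sx} h'\,dx = -s\int e^{sx}h\,dx$, iterated), $\int e^{sx}p_s'\,dx = -s(\mathcal{L}p_s)(s)$, $\int x^2 e^{sx}p_s'\,dx = -2(\mathcal{L}p_s)'(s) - s(\mathcal{L}p_s)''(s)$, and $\int x e^{sx}p_s\,dx = (\mathcal{L}p_s)'(s)$; (4) substitute and collect, observing the cancellation of the first-derivative terms and the matching of the GUE-side terms against $-3(\mathcal{L}p_u)' + 3\sigma^2 s(\mathcal{L}p_u)$. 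Since every step is a finite integration by parts with controlled boundary behavior, there is no genuine obstacle beyond the sign and coefficient bookkeeping, and the proof is essentially a one-paragraph computation paralleling the GUE theorem above.
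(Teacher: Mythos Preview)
Your approach is correct and is exactly the one the paper uses: multiply the density-level equation~\eqref{GSEdiff} by $e^{sx}$, integrate over $\mathbb{R}$, and integrate by parts term by term, with the Gaussian decay of $p_s$ killing all boundary contributions. The only difference is cosmetic---the paper simply displays the intermediate integrated equation and the final identity without writing out each integration-by-parts identity separately---so you should tighten the write-up by removing the exploratory ``wait'' corrections and presenting the four identities $\int e^{sx}p_s'''\,dx=-s^3(\mathcal{L}p_s)$, $\int e^{sx}p_s'\,dx=-s(\mathcal{L}p_s)$, $\int x^2 e^{sx}p_s'\,dx=-2(\mathcal{L}p_s)'-s(\mathcal{L}p_s)''$, $\int x e^{sx}p_s\,dx=(\mathcal{L}p_s)'$ once and then collecting.
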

\begin{proof}
    Multiply (\ref{GSEdiff}) by $\exp(s x)$ and integrate over the real line:
    \begin{multline*}
        \int \exp(s x)\left(4\sigma^4p_{s}'''(x) + (\sigma^2(8n+2) - x^2)p_{s}'(x) - 2x p_{s}(x)\right)dx = \\
        = \int \exp(s x)\left(-3\sigma^2p_u' - 3x p_{u}(x)\right)dx,
    \end{multline*}
    then integrate by parts to obtain
    \begin{multline*}
        s\int x^2\exp(s x)p_{s}dx - s(4\sigma^4 s^2 + \sigma^2(8n+2))\int \exp(s x)p_{s}dx = \\
        = 3\sigma^2 s\int \exp(s x)p_{u}dx - 3\int x\exp(s x)p_{u}dx,
    \end{multline*}
    \begin{equation*}
        s(\mathcal{L}p_s)''(s) - s(4\sigma^4 s^2 + \sigma^2(8n+2))(\mathcal{L}p_s)(s) = -3(\mathcal{L}p_u)'(s) + 3\sigma^2 s (\mathcal{L}p_u)(s)\qedhere
    \end{equation*}
\end{proof}
\subsection{GOE case}
\begin{theorem}
The moment generating function of GOE (\ref{GOEdensity}) satisfies the differential equation
\begin{equation}
    s(\mathcal{L}p_o)''(s) - s(4\sigma^4 s^2 + \sigma^2(4n-2))(\mathcal{L}p_o)(s) = -3(\mathcal{L}p_u)'(s) + 3\sigma^2 s (\mathcal{L}p_u)(s).
\end{equation}    
\end{theorem}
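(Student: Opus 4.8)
The plan is to repeat, essentially verbatim, the computation already carried out for the GUE and GSE moment generating functions, now starting from the density differential equation \eqref{GOEdiff}. First I would multiply \eqref{GOEdiff} by $\exp(sx)$ and integrate over $\mathbb{R}$, getting
\[
 \int \exp(sx)\Bigl(4\sigma^4 p_o'''(x)+(\sigma^2(4n-2)-x^2)p_o'(x)-2x p_o(x)\Bigr)dx
 = \int \exp(sx)\bigl(-3\sigma^2 p_u'(x)-3x p_u(x)\bigr)dx.
\]
The rescaling $x\mapsto\sqrt2\,\sigma x$ that reduces to $\sigma^2=1/2$ can be used exactly as in the proof of Theorem~\ref{theoremGUEdiff} if desired, but it is not essential here.

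Next I would integrate by parts term by term. Each derivative falling on $p_o$ contributes a factor $-s$, so $\int e^{sx}p_o'''\,dx=-s^3(\mathcal Lp_o)(s)$ and $\int e^{sx}p_o'\,dx=-s(\mathcal Lp_o)(s)$; the quadratic-weight term gives $-\int x^2 e^{sx}p_o'\,dx = s(\mathcal Lp_o)''(s)+2(\mathcal Lp_o)'(s)$; and $-2\int x e^{sx}p_o\,dx=-2(\mathcal Lp_o)'(s)$. On the right-hand side $\int e^{sx}\bigl(-3\sigma^2 p_u'-3x p_u\bigr)\,dx = 3\sigma^2 s(\mathcal Lp_u)(s)-3(\mathcal Lp_u)'(s)$. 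The two contributions $+2(\mathcal Lp_o)'$ and $-2(\mathcal Lp_o)'$ cancel — exactly the cancellation that occurred for $(\mathcal Lp_s)'$ in the GSE case, since \eqref{GOEdiff} has the same shape as \eqref{GSEdiff} with $8n+2$ replaced by $4n-2$ — and collecting the remaining terms yields
\[
 s(\mathcal Lp_o)''(s)-s\bigl(4\sigma^4 s^2+\sigma^2(4n-2)\bigr)(\mathcal Lp_o)(s)= -3(\mathcal Lp_u)'(s)+3\sigma^2 s(\mathcal Lp_u)(s),
\]
which is the asserted identity.

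The only point that needs genuine care — and hence the main obstacle — is justifying the integrations by parts: the boundary terms at $\pm\infty$ must vanish, and $(\mathcal Lp_o)(s)$ together with its first two $s$-derivatives must converge. This rests on the Gaussian decay of $p_o$. Indeed, the Hermite functions $\phi_k$ decay like a polynomial times $e^{-x^2/2}$; the factor $\tfrac12\int_{\mathbb R}\sign(x-t)\phi_n(t)\,dt$ appearing in \eqref{Taufunction} is bounded in $x$ (it differs from $\int_{-\infty}^x\phi_n$ by a constant); and $\alpha_n$ in \eqref{Alphafunction} is a constant multiple of $\phi_{n-1}$. Hence $p_o$ and all of its derivatives, multiplied by any polynomial and by $e^{sx}$, are integrable for every $s\in\mathbb R$ (in fact every $s\in\mathbb C$), the bilateral Laplace transform is entire in $s$, and every boundary term produced by integration by parts vanishes. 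Once this decay bookkeeping is in place, the rest is a routine repetition of the GUE and GSE arguments.
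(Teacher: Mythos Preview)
Your proof is correct and follows essentially the same route as the paper: multiply \eqref{GOEdiff} by $e^{sx}$, integrate over $\mathbb{R}$, and integrate by parts. Your version is in fact more careful than the paper's, since you spell out each integration-by-parts contribution and supply the Gaussian-decay justification for why the boundary terms vanish, which the paper leaves implicit.
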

\begin{proof}
    Again, multiply (\ref{GOEdiff}) by $\exp(s x)$ and integrate over the real line:
    \begin{multline*}
        \int \exp(s x)\left(4\sigma^4p_{o}'''(x) + (\sigma^2(4n-2) - x^2)p_{o}'(x) - 2x p_{o}(x)\right)dx = \\
        = \int \exp(s x)\left(-3\sigma^2p_u' - 3x p_{u}(x)\right)dx,
    \end{multline*}
    and integration by parts gives that
    \begin{multline*}
        s\int x^2\exp(s x)p_{o}dx - s(4\sigma^4 s^2 + \sigma^2(4n-22))\int \exp(s x)p_{o}dx = \\
        = 3\sigma^2 s\int \exp(s x)p_{u}dx - 3\int x\exp(s x)p_{u}dx,
    \end{multline*}
    \begin{equation*}
        s(\mathcal{L}p_o)''(s) - s(4\sigma^4 s^2 + \sigma^2(4n-2))(\mathcal{L}p_o)(s) = -3(\mathcal{L}p_u)'(s) + 3\sigma^2 s (\mathcal{L}p_u)(s).\qedhere
    \end{equation*}
\end{proof}
\section{The moment generating functions}\label{SectionMoment}
In this section we calculate explicit expressions of moment generating functions for Gaussian Ensembles. GUE case was done by U. Haagerup and S.Thorbjornsen in \cite{HT} (for reader's convenience, we reproduce their proof of Theorem~\ref{GUEtheorem} here). Then we prove Theorem \ref{GSEtheorem} and Theorem \ref{GOEtheorem} for Symplectic and Orthogonal cases, respectively.

\begin{theorem}[\cite{HT}, Theorem 2.5]\label{GUEtheorem}
    The moment generating function for a GUE random matrix $A_n$ of order $n$ with variance $\sigma^2$ equals
    \begin{equation}\label{GUEgen}
        (\mathcal{L}p_{GUE,n,\sigma^2})(s) = \mathbb{E}(\Tr(e^{sA_n})) = n\cdot e^{\frac{\sigma^2s^2}{2}}{_1F_1}\left(1-n;2;-\sigma^2s^2 \right).
    \end{equation}
\end{theorem}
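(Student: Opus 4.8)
The plan is to compute $(\mathcal{L}p_{GUE,n,\sigma^2})(s)$ directly from the Hermite-function representation \eqref{GUEdensity} and then recognize the resulting sum as a confluent hypergeometric function. First I would use the scaling $x\mapsto \sigma\sqrt2\,x$ to reduce to the case $\sigma^2=1/2$, so that it suffices to evaluate $\int_{\mathbb R} e^{sx}\sum_{k=0}^{n-1}\phi_k(x)^2\,dx = \sum_{k=0}^{n-1}\int_{\mathbb R} e^{sx}\phi_k(x)^2\,dx$ and then restore $\sigma$ at the end. The key classical input is the generating-function / Mehler-type identity for products of Hermite functions: the Fourier--Laplace transform $\int_{\mathbb R} e^{sx}\phi_k(x)^2\,dx$ has a closed form, namely $e^{s^2/4}$ times a Laguerre polynomial $L_k(-s^2/2)$ (up to normalization constants coming from the $\phi_k$). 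I would derive this either from the bilinear generating function $\sum_k \phi_k(x)\phi_k(y)\,t^k = (\text{Mehler kernel})$ specialized at $x=y$, or more elementarily by writing $\phi_k(x)^2$ via the linearization of a product of two Hermite polynomials and integrating term by term against the Gaussian-tilted exponential $e^{sx}e^{-x^2}$, completing the square.

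Once I have $\int_{\mathbb R} e^{sx}\phi_k(x)^2\,dx = e^{s^2/4}\,L_k(-s^2/2)$ (with the $\sigma^2=1/2$ normalization), the remaining step is the summation $\sum_{k=0}^{n-1} L_k(z)$ where $z=-s^2/2$. This telescopes using the standard Laguerre identity $\sum_{k=0}^{n-1}L_k(z) = L_{n-1}^{(1)}(z)$, the associated Laguerre polynomial of index $1$. Then I would convert $L_{n-1}^{(1)}(z)$ to the ${}_1F_1$ form via the well-known relation $L_{m}^{(\alpha)}(z) = \binom{m+\alpha}{m}\,{}_1F_1(-m;\alpha+1;z)$, which here gives $L_{n-1}^{(1)}(z) = n\cdot {}_1F_1(1-n;2;z)$. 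Combining the $e^{s^2/4}$ prefactor with the scaling back to general $\sigma^2$ (which replaces $s^2/4$ by $\sigma^2 s^2/2$ and $z$ by $-\sigma^2 s^2$) produces exactly $n\,e^{\sigma^2 s^2/2}\,{}_1F_1(1-n;2;-\sigma^2 s^2)$, as claimed. The identification $\mathbb{E}(\Tr(e^{sA_n})) = (\mathcal{L}p_{GUE,n,\sigma^2})(s)$ is immediate from the definition of the mean density $p_n$.

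I expect the main obstacle to be bookkeeping rather than anything conceptual: keeping the normalization constants $1/\sqrt{2^k k!\sqrt\pi}$ in $\phi_k$ straight through the Hermite-product linearization and the Gaussian integral, so that the combinatorial factors collapse cleanly into the Laguerre polynomials with the right index and argument. A secondary point requiring a word of justification is the interchange of the finite sum with the integral (trivial, since the sum is finite) and, if one instead derives the single-$k$ transform from the Mehler kernel, the validity of setting $x=y$ and the convergence of the bilinear series for $|t|<1$ before extracting the coefficient of $t^k$. If one prefers to avoid Laguerre polynomials entirely, an alternative route is to verify that the right-hand side of \eqref{GUEgen} satisfies the second-order ODE $s y'' - y' - s(\sigma^4 s^2 + 4n\sigma^2)y = 0$ proved above together with the correct value and derivative at $s=0$ (namely $y(0)=n$, $y'(0)=0$), and invoke uniqueness of solutions; I would mention this as a remark but carry out the direct Hermite/Laguerre computation as the main argument since it is what the cited reference does.
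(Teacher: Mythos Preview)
Your argument is correct, but the paper takes a somewhat different route. Rather than transforming each $\int e^{sx}\phi_k(x)^2\,dx$ separately into a Laguerre polynomial and then invoking the summation identity $\sum_{k=0}^{n-1}L_k = L_{n-1}^{(1)}$, the paper first collapses the sum of squares into a single product via the derivative identity \eqref{DerivSumSq}: integrating $e^{sx}$ against $\sum_{k=0}^{n-1}\phi_k^2$ by parts gives $\frac{\sqrt{2n}}{s}\int e^{sx}\phi_n\phi_{n-1}\,dx$, and this single integral is then evaluated directly by Lemma~\ref{OriginalTwoHermites} (the formula for $\int e^{sx-x^2}H_nH_k\,dx$ as a ${}_1F_1$), with the $s=0$ value checked separately by orthogonality. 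Your approach has the advantage of being self-contained in standard Laguerre identities and not requiring a separate argument at $s=0$; the paper's approach has the advantage that the key Lemma~\ref{OriginalTwoHermites} is exactly the tool reused in the GSE and GOE computations that follow, so proving it once pays off three times. Incidentally, your remark that one could instead match the ODE plus initial data is also valid, though neither you nor the paper pursues it.
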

\begin{theorem}\label{GSEtheorem}
    The moment generating function for a GSE random matrix $A_n$ of order $n$ with variance $\sigma^2$ equals
    \begin{multline}\label{GSEgen}
        (\mathcal{L}p_{GSE,n,\sigma^2})(s) = \mathbb{E}(\Tr (e^{sA_n})) = -e^{\frac{\sigma^2s^2}{2}}\sum_{i=0}^{n}\frac{\sigma^{2i}s^{2i}2^i n!}{(2i)!(n-i)!}{_1F_1}\left(-(2n-2i);1+2i;-\sigma^{2}s^{2}\right) + \\ + n\cdot e^{\frac{\sigma^2s^2}{2}}{_1F_1}\left(1-n;2;-\sigma^2s^2 \right).
    \end{multline}
\end{theorem}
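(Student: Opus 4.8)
The plan is to compute the bilateral Laplace transform of the explicit GSE density \eqref{GSEdensity} term by term. Since $\mathcal L$ is linear and $\mathcal L\bigl[\tfrac1{\sigma\sqrt2}\tau_{2n+1}(\tfrac{\cdot}{\sigma\sqrt2})\bigr](s)=(\mathcal L\tau_{2n+1})(\sigma\sqrt2\,s)$ by the substitution $y=x/(\sigma\sqrt2)$, the decomposition \eqref{GSEdensity} gives
\[
  (\mathcal L p_{GSE,n,\sigma^2})(s)=(\mathcal L p_{GUE,2n+1,\sigma^2})(s)+(\mathcal L\tau_{2n+1})(\sigma\sqrt2\,s),
\]
and the first term is $(2n+1)e^{\sigma^2s^2/2}{}_1F_1(-2n;2;-\sigma^2s^2)$ by Theorem~\ref{GUEtheorem}. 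So the whole task reduces to an explicit formula for $\mathcal L\tau_m$ with $m=2n+1$ odd; this transform is entire because $\tau_{2n+1}$ turns out to be a finite combination of Hermite functions (next step), so all the integrations by parts below are legitimate.

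First I would turn $\tau_{2n+1}$ into a finite sum of products of two Hermite functions. As $\phi_m$ is odd for $m=2n+1$, one has $\tfrac12\int_{\mathbb R}\sign(x-t)\phi_m(t)\,dt=\int_{-\infty}^{x}\phi_m(t)\,dt$. Solving the ladder relation $\phi_k'=\sqrt{k/2}\,\phi_{k-1}-\sqrt{(k+1)/2}\,\phi_{k+1}$ for $\phi_{k+1}$ and integrating yields the recursion $\int_{-\infty}^x\phi_{2k+1}=\sqrt{\tfrac{2k}{2k+1}}\int_{-\infty}^x\phi_{2k-1}-\sqrt{\tfrac{2}{2k+1}}\,\phi_{2k}$ with base case $\int_{-\infty}^x\phi_1=-\sqrt2\,\phi_0$; an easy induction then gives $\int_{-\infty}^x\phi_{2n+1}=-\sum_{j=0}^{n}d_{n,j}\phi_{2j}(x)$ with explicit positive constants $d_{n,j}$. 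Hence $\tau_{2n+1}=-\sqrt{(2n+1)/2}\,\sum_{j=0}^{n}d_{n,j}\,\phi_{2n}\phi_{2j}$, and $\mathcal L\tau_{2n+1}$ is a known linear combination of the transforms $\mathcal L[\phi_{2n}\phi_{2j}]$.

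The decisive ingredient is a closed form for $\mathcal L[\phi_M\phi_N](u)$ when $M\ge N$. Writing $\phi_M\phi_N$ as a constant times $H_MH_Ne^{-x^2}$ and completing the square via $e^{ux}e^{-x^2}=e^{u^2/4}e^{-(x-u/2)^2}$ reduces the transform to the classical integral $\int_{\mathbb R}e^{-x^2}H_M(x+a)H_N(x+a)\,dx$, which is a constant multiple of $a^{M-N}L_N^{(M-N)}(-2a^2)$; combined with $L_N^{(\alpha)}(z)=\binom{N+\alpha}{N}{}_1F_1(-N;\alpha+1;z)$ this gives an expression of the shape $(\mathrm{const}_{M,N})\,u^{M-N}e^{u^2/4}\,{}_1F_1(-N;M-N+1;-u^2/2)$. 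Taking $M=2n$, $N=2j$, putting $u=\sigma\sqrt2\,s$, inserting $d_{n,j}$ and re-indexing by $i=n-j$ should make all the normalizing constants collapse (I expect the square roots, the prefactor $\sqrt{(2n+1)/2}$, and the factor coming from $L_N^{(\alpha)}$ to telescope to $\tfrac{2^i n!}{(2i)!(n-i)!}$), leaving
\[
  (\mathcal L\tau_{2n+1})(\sigma\sqrt2\,s)=-e^{\sigma^2s^2/2}\sum_{i=0}^{n}\frac{\sigma^{2i}s^{2i}2^i\,n!}{(2i)!(n-i)!}\,{}_1F_1\bigl(-(2n-2i);1+2i;-\sigma^2s^2\bigr),
\]
i.e.\ precisely the sum in \eqref{GSEgen}; adding the unitary part $(\mathcal L p_{GUE,2n+1,\sigma^2})(s)$ from Theorem~\ref{GUEtheorem} then produces the right-hand side of \eqref{GSEgen}.

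The step I expect to be the main obstacle is this last one: obtaining the Hermite-product integral together with the precise normalizations of $\phi_k$, $L_N^{(\alpha)}$ and ${}_1F_1$, so that the unwieldy $d_{n,j}$ cancel exactly against the constants of $\mathcal L[\phi_{2n}\phi_{2j}]$ — the clean closed form only emerges after all of this bookkeeping. A useful independent check at each stage is to verify the moment-generating-function ODE of Section~\ref{SectionDifferentialMoments} and to match the first couple of moments. A self-contained alternative to the whole density computation is to solve that ODE directly: its inhomogeneity is explicit via Theorem~\ref{GUEtheorem}, a particular solution of the form $e^{\sigma^2s^2/2}\cdot(\mathrm{polynomial}\cdot{}_1F_1)$ can be found by undetermined coefficients, and the homogeneous correction is fixed by matching low moments; this route exchanges the Hermite-product combinatorics for the analysis of an inhomogeneous second-order linear ODE.
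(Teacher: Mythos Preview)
Your proposal is correct and follows essentially the same route as the paper: decompose $p_{GSE,n,\sigma^2}=p_{GUE,2n+1,\sigma^2}+\tau_{2n+1}$, reduce the sign integral in $\tau_{2n+1}$ to a finite sum of Hermite terms via the recurrence (the paper does this for $H_n$ in Lemma~\eqref{antiHermite}/\eqref{middleSimple}, you do the equivalent ladder step for $\phi_k$), and then evaluate the resulting Hermite-product transforms (the paper uses the shift identity \eqref{HermiteShift} in Lemma~\ref{OriginalTwoHermites}, you route through Laguerre polynomials---these are the same classical identity). The only cosmetic discrepancy is that the stated right-hand side of \eqref{GSEgen} writes the unitary piece as $n\cdot e^{\sigma^2s^2/2}{_1F_1}(1-n;2;-\sigma^2s^2)$, whereas both your argument and the paper's own proof (``We already know the Laplace transform of the first term'') identify it with $(\mathcal L p_{GUE,2n+1,\sigma^2})(s)$; this is a typo in the theorem statement, not a gap in your reasoning.
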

\begin{theorem}\label{GOEtheorem}
    The moment generating function for a GOE random matrix $A_n$ of order $n$ with variance $\sigma^2$ equals
    \begin{multline}\label{GOEgen}
        (\mathcal{L}p_{GOE,n,\sigma^2})(s) = \mathbb{E}(\Tr (e^{sA_n})) = \mathcal{L}(p_{GUE,n,\sigma^2}) + \mathbb{I}_{\text{n is odd}} e^{\sigma^2s^2}{_1F_1}\left(-\frac{n-1}{2};\frac{1}{2};-2\sigma^2s^2\right) - \\
        -e^{\frac{\sigma^2s^2}{2}}\sum_{i=0}^{\floor*{\frac{n-1}{2}}}\frac{\sigma^{2i}s^{2i}(n-1)!!}{(2i)!(n-1-2i)!!}{_1F_1}\left(-(n-1-2i);1+2i;-\sigma^{2}s^{2}\right) + \\
        + \mathbb{I}_{\text{n is even}}\frac{(n-1)!!}{2^{\frac{n}{2}} (n-1)!}\biggl[2s\frac{(n-1)!}{\left(\frac{n-2}{2}\right)!}{_1F_1}\left(-\frac{n-2}{2}, \frac{3}{2}, -2\sigma^2 s^2\right)e^{\sigma^2 s^2}\sqrt{2}\int_0^{\sigma s} e^{-\frac{t^2}{2}}dt  + \\
        + e^{\frac{\sigma^2s^2}{2}}\sum_{j=1}^{n-1}\binom{n-1}{j}(-i)^{n-1-j}H_{n-1-j}(i \sigma \sqrt{2}s)H_{j-1}\left(-\frac{\sigma s}{\sqrt{2}}\right) \biggr].
    \end{multline}
\end{theorem}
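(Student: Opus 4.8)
The plan is to mirror the structure of the proofs of Theorems~\ref{GUEtheorem} and~\ref{GSEtheorem}, exploiting the decomposition \eqref{GOEdensity} of the GOE density as the GUE density plus the two correction terms $\tau_n$ and $\alpha_n$. Since the Laplace transform is linear, it suffices to compute $\mathcal{L}(\tau_n)$ and $\mathcal{L}(\alpha_n)$ separately (after the scaling $x\mapsto \sigma\sqrt2\,x$) and add them to the already-known $\mathcal{L}(p_{GUE,n,\sigma^2})$ from \eqref{GUEgen}. The term $\mathcal{L}(\alpha_n)$ is the easy one: for $n$ odd, $\alpha_n$ is a normalized single Hermite function $\phi_{n-1}$, so its Laplace transform is a classical Gaussian-times-polynomial integral; one computes $\int_{\mathbb R} e^{sx}\phi_{n-1}(x)\,dx$ using the generating function or the Fourier-eigenfunction property of Hermite functions, which produces a term of the shape $e^{\sigma^2 s^2}\,{}_1F_1\!\bigl(-\tfrac{n-1}{2};\tfrac12;-2\sigma^2 s^2\bigr)$ after converting $s^{n-1}$-type output into a confluent hypergeometric function via the standard Hermite-polynomial identity $H_{2m}(y)=(-1)^m\tfrac{(2m)!}{m!}\,{}_1F_1(-m;\tfrac12;y^2)$. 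For $n$ even this term is absent, matching the indicator $\mathbb{I}_{\text{n is odd}}$ in \eqref{GOEgen}.

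The substantive work is $\mathcal{L}(\tau_n)$ where $\tau_n(x)=\sqrt{n/2}\,\phi_{n-1}(x)\cdot\tfrac12\int_{\mathbb R}\sign(x-t)\phi_n(t)\,dt$. Here I would split on the parity of $n$. When $n$ is odd, $\phi_n$ is an odd function, so $\tfrac12\int_{\mathbb R}\sign(x-t)\phi_n(t)\,dt$ is, up to sign, an antiderivative of $\phi_n$ that decays at $\pm\infty$; integrating by parts in $\mathcal{L}(\tau_n)$ moves a derivative onto the $\phi_{n-1}$ factor, and using the ladder relations $\phi_n'=\sqrt{n/2}\,\phi_{n-1}-\sqrt{(n+1)/2}\,\phi_{n+1}$ (appropriately normalized to the convention here) reduces everything to Laplace transforms of products $\phi_k\phi_\ell$, which are Gaussians times polynomials and can be resummed. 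This should produce exactly the finite sum $\sum_{i=0}^{\lfloor (n-1)/2\rfloor}\tfrac{\sigma^{2i}s^{2i}(n-1)!!}{(2i)!(n-1-2i)!!}\,{}_1F_1(-(n-1-2i);1+2i;-\sigma^2 s^2)$, with the $e^{\sigma^2s^2/2}$ prefactor coming from the $e^{-x^2/2}$ in the Hermite functions. When $n$ is even the sign-integral term does not decay, and its Laplace transform picks up a genuinely new contribution; this is the source of the error-function factor $\int_0^{\sigma s}e^{-t^2/2}\,dt$ and of the double sum $\sum_{j=1}^{n-1}\binom{n-1}{j}(-i)^{n-1-j}H_{n-1-j}(i\sigma\sqrt2 s)H_{j-1}(-\sigma s/\sqrt2)$ via the Christoffel–Darboux-type expansion of $\int_{\mathbb R}\sign(x-t)\phi_n(t)\,dt$ combined with the Hermite addition formula.

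Concretely, the steps in order are: (1) reduce to $\sigma^2=1/2$ by the change of variables $x\mapsto \sigma\sqrt2\,x$, as in Theorem~\ref{theoremGUEdiff}; (2) write $\mathcal{L}(p_o)=\mathcal{L}(p_{GUE})+\mathcal{L}(\tau_n)+\mathcal{L}(\alpha_n)$ and dispatch $\mathcal{L}(\alpha_n)$ immediately; (3) compute the elementary building block $\int_{\mathbb R}e^{sx}\phi_k(x)\phi_\ell(x)\,dx$ in closed form (a Gaussian $e^{s^2/4}$ times a polynomial in $s$, expressible through Hermite polynomials in $s/\sqrt2$), and separately $\int_{\mathbb R}e^{sx}\phi_{n-1}(x)\bigl(\int_{\mathbb R}\sign(x-t)\phi_n(t)\,dt\bigr)dx$, using Fubini and the explicit Gaussian integral $\int_t^{\infty}e^{sx}\phi_{n-1}(x)\,dx$; (4) in the odd-$n$ case, collect terms and recognize the result as the stated finite ${}_1F_1$-sum via $H_{2m}(y)=(-1)^m(2m)!/m!\,{}_1F_1(-m;1/2;y^2)$ and $H_{2m+1}(y)=(-1)^m 2(2m+1)!/m!\,y\,{}_1F_1(-m;3/2;y^2)$; (5) in the even-$n$ case, isolate the non-decaying part of the sign-integral, whose Laplace transform yields the $e^{\sigma^2s^2}\sqrt2\int_0^{\sigma s}e^{-t^2/2}dt$ factor and, after applying the Hermite addition/multiplication identities to the Christoffel–Darboux kernel, the displayed double sum. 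I expect step~(5) — matching the even-$n$ boundary contribution to the precise combination of an error function times ${}_1F_1(-\tfrac{n-2}{2},\tfrac32,-2\sigma^2s^2)$ and the $\binom{n-1}{j}$ double sum — to be the main obstacle, since it requires carefully tracking the non-vanishing boundary term through Fubini and then invoking the Hermite addition theorem in exactly the right normalization; the odd-$n$ case and $\mathcal{L}(\alpha_n)$ are comparatively routine, being direct analogues of the GSE computation in Theorem~\ref{GSEtheorem}.
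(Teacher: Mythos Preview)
Your overall plan matches the paper's proof: reduce to $\sigma^2=1/2$, use the decomposition $p_o=p_u+\tau_n+\alpha_n$, compute $\mathcal L\alpha_n$ via the Fourier-eigenfunction property of $\phi_{n-1}$ together with~\eqref{HermiteHyper}, and split $\mathcal L\tau_n$ on the parity of $n$. For the $\tau_n$ piece the paper does not integrate by parts in the outer $x$-integral as you describe; instead it first evaluates the inner sign-integral in closed form via the two-step recurrence
\[
\int e^{-t^2/2}H_m(t)\,dt=-2e^{-t^2/2}H_{m-1}(t)+2(m-1)\int e^{-t^2/2}H_{m-2}(t)\,dt,
\]
obtaining $\tfrac12\int_{\mathbb R}\sign(x-t)e^{-t^2/2}H_n(t)\,dt$ as a finite sum $\sum_i c_i\,e^{-x^2/2}H_{n-1-2i}(x)$ plus, when $n$ is even, the residual $2^{n/2}(n-1)!!\int_0^x e^{-t^2/2}dt$; it then applies Lemma~\ref{OriginalTwoHermites} to each resulting $\int e^{sx-x^2}H_{n-1}H_{n-1-2i}\,dx$. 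Your ladder-relation idea would reproduce exactly this recurrence, so for odd $n$ (and for the ``decaying'' part when $n$ is even) you and the paper effectively coincide. Your alternative Fubini route simply swaps which of the two incomplete Hermite--Gaussian integrals you must evaluate; it buys nothing.

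The gap is in your step~(5). The Christoffel--Darboux kernel plays no role here, and Fubini alone does not deliver the stated form. The paper handles the even-$n$ residual $A_{n-1}(s)=\int_{\mathbb R}e^{sx-x^2/2}H_{n-1}(x)\int_0^x e^{-t^2/2}dt\,dx$ by a generating-function trick you do not mention: setting $\mathcal A(x)=\sum_m A_m(s)x^m/m!$, the three-term recurrence $A_{m+1}=2sA_m+2mA_{m-1}+\sqrt\pi\,e^{s^2/4}s^m$ (from~\eqref{recurrent} and one integration by parts) gives the first-order ODE $\mathcal A'=(2s+2x)\mathcal A+\sqrt\pi\,e^{s^2/4}e^{sx}$, solved explicitly as $\mathcal A(x)=e^{2sx+x^2}\bigl(A_0(s)+\sqrt\pi\,e^{s^2/4}\int_0^x e^{-st-t^2}dt\bigr)$. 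The initial value $A_0(s)$ is the source of the error-function factor $\sqrt{2\pi}\,e^{s^2/2}\int_0^{s/\sqrt2}e^{-t^2/2}dt$. Then $A_{n-1}(s)=\mathcal A^{(n-1)}(0)$ is extracted via the Leibniz rule: differentiating $e^{2sx+x^2}$ at $x=0$ produces, through the Hermite generating function~\eqref{HermiteGenerating} at imaginary argument, the factors $(-i)^{n-1-j}H_{n-1-j}(is)$, while differentiating the bracket produces the $H_{j-1}(-s/2)$. Without this device or an explicit equivalent, you will not arrive at the $\binom{n-1}{j}$ double sum in~\eqref{GOEgen}.
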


We start with a couple of preparatory calculations.
\begin{lemma}\label{OriginalTwoHermites}
    For any $k, n \in \mathbb{N}$ such that $k \leq n$,
    \begin{equation}
        \int_{\mathbb{R}}e^{sx-x^2}H_n(x)H_k(x)dx = \frac{\sqrt{\pi}n!2^k}{(n-k)!}e^{\frac{s^2}{4}}s^{n-k}{_1F_1}\left(-k,n-k+1,-\frac{s^2}{2}\right).
    \end{equation}
\end{lemma}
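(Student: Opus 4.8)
The plan is to reduce the integral $\int_{\mathbb R} e^{sx-x^2}H_n(x)H_k(x)\,dx$ to a known generating-function identity for Hermite polynomials by a combination of the generating function and orthogonality. First I would recall Mehler-type or simpler bilinear generating identities. Concretely, the cleanest route: use the Fourier/Gaussian representation $e^{sx-x^2}=e^{s^2/4}e^{-(x-s/2)^2}$, so after the substitution $y=x-s/2$ the integral becomes $e^{s^2/4}\int_{\mathbb R}e^{-y^2}H_n(y+s/2)H_k(y+s/2)\,dy$. Then I would expand each shifted Hermite polynomial via the addition formula $H_m(y+a)=\sum_{\ell=0}^m \binom{m}{\ell}H_\ell(y)(2a)^{m-\ell}$ (with $a=s/2$, so $2a=s$), and apply Hermite orthogonality $\int_{\mathbb R}e^{-y^2}H_\ell(y)H_{\ell'}(y)\,dy=\sqrt\pi\,2^\ell \ell!\,\delta_{\ell\ell'}$ to collapse the double sum to a single sum over the common index $\ell$, which runs from $0$ to $k$ since $k\le n$.

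The resulting single sum is
\[
e^{\frac{s^2}{4}}\sqrt\pi\sum_{\ell=0}^{k}\binom{n}{\ell}\binom{k}{\ell}2^\ell \ell!\; s^{n-\ell}s^{k-\ell}
=e^{\frac{s^2}{4}}\sqrt\pi\, s^{n-k}\sum_{\ell=0}^{k}\frac{n!\,k!}{\ell!(n-\ell)!(k-\ell)!}\,2^\ell s^{2k-2\ell}.
\]
The remaining step is to recognize this finite sum as a value of ${_1F_1}$. I would reindex by $j=k-\ell$ so the sum becomes $\sum_{j=0}^{k}\frac{n!\,k!\,2^{k-j}}{(k-j)!(n-k+j)!\,j!}s^{2j}$; pulling out $2^k$ and comparing term-by-term with the series \eqref{Hyper} for ${_1F_1}(-k,\,n-k+1,\,-s^2/2)$ — whose $j$-th coefficient is $\frac{(-k)(-k+1)\cdots(-k+j-1)}{(n-k+1)\cdots(n-k+j)}\frac{(-s^2/2)^j}{j!}=\frac{(-1)^j k!/(k-j)!}{(n-k+j)!/(n-k)!}\frac{(-1)^j s^{2j}}{2^j j!}$ — I would check the coefficients agree after accounting for the prefactor $\frac{n!2^k}{(n-k)!}$. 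This matching is the routine but bookkeeping-heavy part.

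The main obstacle I expect is not conceptual but combinatorial: getting every factorial, sign, and power of $2$ exactly right in the identification of the finite hypergeometric sum, and in particular verifying the addition formula is applied with the correct normalization ($H_m(y+a)=\sum_\ell\binom m\ell (2a)^{m-\ell}H_\ell(y)$, which itself follows from the generating function $e^{2xt-t^2}=\sum H_m(x)t^m/m!$). One should also note the hypothesis $k\le n$ is used precisely to guarantee $n-k+1\ge 1$ so that ${_1F_1}$ is well-defined (its second parameter avoids the forbidden set $\mathbb Z\setminus\mathbb N$) and that the upper summation limit is $k$, matching the polynomial truncation of ${_1F_1}(-k,\cdot,\cdot)$. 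Absolute convergence of all integrals is immediate from the Gaussian factor, so no analytic subtleties arise and interchanging sum and integral is justified by finiteness of the sums.
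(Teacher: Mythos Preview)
Your proposal is correct and follows essentially the same approach as the paper: complete the square and shift $y=x-s/2$, expand both shifted Hermite polynomials via the addition formula~\eqref{HermiteShift}, use orthogonality~\eqref{orthRel} to reduce to a single sum, then reindex $j=k-\ell$ and match coefficients with the series~\eqref{Hyper} for ${_1F_1}(-k,n-k+1,-s^2/2)$. The paper's proof is line-for-line the same computation.
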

\begin{proof}
    By the substitution $y = x-\frac{s}{2}$ and properties (\ref{orthRel}), (\ref{HermiteShift}) of Hermite polynomials we have
    \begin{gather*}
        \int_{\mathbb{R}}e^{sx-x^2}H_n(x)H_k(x)dx = e^{\frac{s^2}{4}}\int_{\mathbb{R}}e^{-y^2}H_n\left(y+\frac{s}{2}\right)H_k\left(y+\frac{s}{2}\right)dy = \\
        = e^{\frac{s^2}{4}}\sum_{i=0}^{min\{k,n\}}\binom{k}{i} \binom{n}{i} 2^i i!\sqrt{\pi} s^{k+n-2i}.
    \end{gather*}
    Substituting $i$ by $k-i$ and using the definition of the confluent hypergeometric function (\ref{Hyper}), we get
    \begin{gather*}
        e^{\frac{s^2}{4}}\sum_{i=0}^{k}\binom{k}{i} \binom{n}{i} 2^i i!\sqrt{\pi} s^{k+n-2i} = \sqrt{\pi}e^{\frac{s^2}{4}}s^{n-k}\sum_{i=0}^{k}\frac{k!n!2^i s^{2k-2i}}{(k-i)!i!(n-i)!} = \\
        =\frac{\sqrt{\pi}n!2^k}{(n-k)!}e^{\frac{s^2}{4}}s^{n-k}\sum_{i=0}^{k}\frac{k(k-1)\dots(k-i+1)}{(n-k+1)(n-k+2)\dots(n-k+i)i!}\left(\frac{s^2}{2}\right)^i = \\
        =\frac{\sqrt{\pi}n!2^k}{(n-k)!}e^{\frac{s^2}{4}}s^{n-k}{_1F_1}\left(-k,n-k+1,-\frac{s^2}{2}\right).\qedhere
    \end{gather*}
\end{proof}

\begin{corollary}\label{twoHermites}
    For any $k, n \in \mathbb{N}$ such that $0 \leq k \leq n$,
    \begin{equation*}
        \int_{\mathbb{R}}e^{sx-x^2}H_n(x)H_{n-k}(x)dx = \frac{\sqrt{\pi}n!2^{n-k}}{k!}e^{\frac{s^2}{4}}s^k{_1F_1}\left(-(n-k),1+k,-\frac{s^2}{2}\right).
    \end{equation*}
\end{corollary}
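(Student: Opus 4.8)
The plan is to obtain this as an immediate consequence of Lemma~\ref{OriginalTwoHermites}. That lemma evaluates $\int_{\mathbb{R}} e^{sx-x^2} H_n(x) H_k(x)\,dx$ for any pair of nonnegative integers with $k \le n$; here I want the second index to be $n-k$ rather than $k$, so the natural move is to apply the lemma with the role of its ``$k$'' played by $n-k$.

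First I would check that the substitution is legitimate: the hypothesis $0 \le k \le n$ of the corollary is exactly what guarantees that $n-k$ is a nonnegative integer satisfying $n-k \le n$, so Lemma~\ref{OriginalTwoHermites} applies verbatim to the pair $(n,\,n-k)$ in place of $(n,\,k)$.

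Then I would simply read off the places where the index appears on the right-hand side of the lemma and carry out the replacement $k \mapsto n-k$: the exponent of $2$ becomes $n-k$, the factorial in the denominator becomes $\bigl(n-(n-k)\bigr)! = k!$, the power of $s$ becomes $s^{\,n-(n-k)} = s^{k}$, and the parameters of the confluent hypergeometric function become $\bigl(-(n-k),\,n-(n-k)+1,\,-\tfrac{s^2}{2}\bigr) = \bigl(-(n-k),\,k+1,\,-\tfrac{s^2}{2}\bigr)$. Assembling these yields precisely the asserted identity, so no genuine computation is required.

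There is essentially no obstacle: the only point needing care is that the admissible ranges of the two indices match up under the relabeling, and this is handled automatically by the hypothesis $0 \le k \le n$. One could alternatively rerun the Gaussian-integral computation of the lemma directly with the product $H_n H_{n-k}$, but that would merely repeat the same steps and is not worth doing.
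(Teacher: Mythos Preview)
Your proposal is correct and is exactly the paper's approach: the statement is recorded as a corollary of Lemma~\ref{OriginalTwoHermites} with no separate proof, and the only content is the reindexing $k \mapsto n-k$ that you describe.
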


\begin{lemma}
    For any $n \in \mathbb{N}$, $a \in \mathbb{R}$,
    \begin{align}\label{antiHermite}
        \int_{a}^{x}e^{-\frac{t^2}{2}}H_{n}(t)dt = {}& {-2}\sum_{i=0}^{\floor*{\frac{n-1}{2}}} \frac{2^i (n-1)!!}{(n-1-2i)!!}\bigl(e^{-\frac{t^2}{2}}H_{n-1-2i}(t)\bigr) \Big|_{t=a}^{t=x} + \\
        \notag
        &{}+ \mathbb{I}_{\text{n is even}}2^{\frac{n}{2}}(n-1)!!\int_a^x e^{-\frac{t^2}{2}}dt.
    \end{align}
\end{lemma}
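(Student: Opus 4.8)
The plan is to establish the antiderivative identity \eqref{antiHermite} by induction on $n$, using the differentiation-reduction recurrences for Hermite polynomials. The key observation is that $\frac{d}{dt}\bigl(e^{-t^2/2}H_m(t)\bigr)$ can be rewritten, using $H_m'(t)=2mH_{m-1}(t)$ and the three-term recurrence $H_{m+1}(t)=2tH_m(t)-2mH_{m-1}(t)$, in the form $e^{-t^2/2}\bigl(2mH_{m-1}(t)-tH_m(t)\bigr)=-e^{-t^2/2}\cdot\tfrac12\bigl(H_{m+1}(t)-2mH_{m-1}(t)\bigr)+\dots$; more efficiently, one checks directly that $\frac{d}{dt}\bigl(e^{-t^2/2}H_m(t)\bigr) = -e^{-t^2/2}H_{m+1}(t) + \bigl(\text{something}\bigr)e^{-t^2/2}H_{m-1}(t)$ by combining $H_{m+1}=2tH_m-H_m'$ with $H_m'=2mH_{m-1}$, giving $tH_m = \tfrac12 H_{m+1} + mH_{m-1}$ and hence $\frac{d}{dt}\bigl(e^{-t^2/2}H_m(t)\bigr) = e^{-t^2/2}\bigl(H_m' - tH_m\bigr) = e^{-t^2/2}\bigl(2mH_{m-1} - \tfrac12 H_{m+1} - mH_{m-1}\bigr) = e^{-t^2/2}\bigl(mH_{m-1} - \tfrac12 H_{m+1}\bigr)$. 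Rearranging, $e^{-t^2/2}H_{m+1}(t) = -2\frac{d}{dt}\bigl(e^{-t^2/2}H_m(t)\bigr) + 2m\,e^{-t^2/2}H_{m-1}(t)$.

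First I would integrate this last relation from $a$ to $x$ with $m=n-1$, which yields
$$\int_a^x e^{-t^2/2}H_n(t)\,dt = -2\bigl(e^{-t^2/2}H_{n-1}(t)\bigr)\Big|_a^x + 2(n-1)\int_a^x e^{-t^2/2}H_{n-2}(t)\,dt.$$
This is the recursion that drives the induction: it expresses the integral of $e^{-t^2/2}H_n$ in terms of a boundary term plus $2(n-1)$ times the integral of $e^{-t^2/2}H_{n-2}$, dropping the index by two. Iterating, the boundary terms accumulate as $-2\sum_i \bigl(\prod \text{of the }2(n-1-2k)\bigr)\bigl(e^{-t^2/2}H_{n-1-2i}(t)\bigr)\big|_a^x$, and the product $2(n-1)\cdot 2(n-3)\cdots$ telescopes into $\dfrac{2^i(n-1)!!}{(n-1-2i)!!}$ exactly, matching the stated coefficient. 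The recursion bottoms out at $H_0$ or $H_1$ depending on the parity of $n$: when $n$ is odd it reaches $\int_a^x e^{-t^2/2}H_1(t)\,dt = \int_a^x 2t e^{-t^2/2}\,dt = -2\bigl(e^{-t^2/2}\bigr)\big|_a^x = -2\bigl(e^{-t^2/2}H_0(t)\bigr)\big|_a^x$, which contributes only to the boundary sum (the $i=(n-1)/2$ term) and leaves no leftover integral; when $n$ is even it reaches $\int_a^x e^{-t^2/2}H_0(t)\,dt = \int_a^x e^{-t^2/2}\,dt$, which survives with accumulated coefficient $2^{n/2}(n-1)!!$, producing exactly the $\mathbb{I}_{\text{n is even}}$ term.

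For the base cases I would verify $n=1$ (where the formula says $\int_a^x e^{-t^2/2}H_1(t)\,dt = -2\bigl(e^{-t^2/2}H_0(t)\bigr)\big|_a^x$, i.e. $-2(e^{-x^2/2}-e^{-a^2/2})$, which is immediate since $H_1=2t$ and $H_0=1$) and $n=2$ (where $H_2 = 4t^2-2$, and the formula claims $-2\cdot\tfrac{2^0\cdot 1!!}{1!!}\bigl(e^{-t^2/2}H_1(t)\bigr)\big|_a^x + 2^1\cdot 1!!\int_a^x e^{-t^2/2}dt = -2(2te^{-t^2/2})\big|_a^x + 2\int_a^x e^{-t^2/2}dt$, which one checks against $\int 4t^2 e^{-t^2/2}dt = -4te^{-t^2/2} + 4\int e^{-t^2/2}dt$ minus $2\int e^{-t^2/2}dt$). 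Then the inductive step is exactly the displayed recursion: assume \eqref{antiHermite} for $n-2$, substitute into $\int_a^x e^{-t^2/2}H_n = -2(e^{-t^2/2}H_{n-1})\big|_a^x + 2(n-1)\int_a^x e^{-t^2/2}H_{n-2}$, and reconcile the coefficients by checking $2(n-1)\cdot\dfrac{2^i(n-3)!!}{(n-3-2i)!!} = \dfrac{2^{i+1}(n-1)!!}{(n-1-2(i+1))!!}$ with a shift $i\mapsto i+1$ in the sum, and $2(n-1)\cdot 2^{(n-2)/2}(n-3)!! = 2^{n/2}(n-1)!!$ for the even leftover. The main obstacle is purely bookkeeping: getting the index shifts in the double-factorial coefficients and the floor bounds on the summation to align under the parity split, especially checking that the new top term $i=(n-1)/2$ (odd case) or the leftover integral coefficient (even case) emerges correctly from the recursion rather than being an extra term one must add by hand; once the double-factorial telescoping is pinned down the rest is mechanical.
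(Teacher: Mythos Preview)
Your proof is correct and follows essentially the same route as the paper: both derive the recursion $\int_a^x e^{-t^2/2}H_n\,dt = -2\bigl(e^{-t^2/2}H_{n-1}\bigr)\big|_a^x + 2(n-1)\int_a^x e^{-t^2/2}H_{n-2}\,dt$ from the Hermite recurrences and then iterate it. The only cosmetic difference is that the paper obtains the recursion by writing $H_n = 2tH_{n-1} - H_{n-1}'$ and integrating by parts, whereas you obtain it by differentiating $e^{-t^2/2}H_m$ and rearranging; your explicit induction with base cases and coefficient bookkeeping merely spells out what the paper summarizes as ``which implies~\eqref{antiHermite}.''
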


\begin{proof}Property (\ref{recurrent}) of Hermite polynomials and integration by parts yield recursive relation
    \begin{multline*}
        \int_{a}^{x} e^{-\frac{t^2}{2}}H_{n}(t)dt = 2\int_{a}^{x} te^{-\frac{t^2}{2}}H_{n-1}(t)dt - \int_{a}^{x} e^{-\frac{t^2}{2}}H_{n-1}'(t)dt= \\
        -2e^{-\frac{t^2}{2}}H_{n-1}(t) \Big|_a^x + 2(n-1)\int_{a}^{x} e^{-\frac{t^2}{2}}H_{n-2}(t)dx,
    \end{multline*}
which implies~\eqref{antiHermite}.
\end{proof}

\begin{corollary}
    For odd $n$ we have
   \begin{equation}\label{antiHermiteOdd}
      \int_{-\infty}^{x}e^{-\frac{t^2}{2}}H_{n}(t)dt = -2\sum_{i=0}^{\floor*{\frac{n-1}{2}}} \frac{2^i (n-1)!!}{(n-1-2i)!!}e^{-\frac{x^2}{2}}H_{n-1-2i}(x).
   \end{equation}
\end{corollary}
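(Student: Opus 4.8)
The plan is to obtain \eqref{antiHermiteOdd} simply by letting $a \to -\infty$ in the identity \eqref{antiHermite} of the preceding lemma. First I would note that when $n$ is odd the term carrying the factor $\mathbb{I}_{\text{$n$ is even}}$ disappears, so for every real $a$ the lemma reads
\[
   \int_{a}^{x}e^{-\frac{t^2}{2}}H_{n}(t)\,dt = -2\sum_{i=0}^{\floor*{\frac{n-1}{2}}} \frac{2^i (n-1)!!}{(n-1-2i)!!}\bigl(e^{-\frac{t^2}{2}}H_{n-1-2i}(t)\bigr) \Big|_{t=a}^{t=x}.
\]

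The remaining steps are to check that both sides converge as $a \to -\infty$. On the left, $H_n$ is a polynomial, so $t \mapsto e^{-t^2/2}H_n(t)$ decays faster than any power of $|t|$ and is therefore absolutely integrable on $(-\infty,x]$; hence the improper integral $\int_{-\infty}^{x} e^{-t^2/2}H_n(t)\,dt$ exists. On the right, each lower boundary contribution is $e^{-a^2/2}H_{n-1-2i}(a)$, and for the same reason this tends to $0$ as $a \to -\infty$, so only the evaluations at $t=x$ remain. Passing to the limit $a\to-\infty$ in the displayed identity then yields exactly \eqref{antiHermiteOdd}.

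I do not expect a genuine obstacle here: the only point that needs to be spelled out is that the Gaussian factor $e^{-t^2/2}$ dominates the polynomial growth of each Hermite factor, and this single observation simultaneously guarantees convergence of the improper integral on the left and the vanishing of the lower boundary terms on the right.
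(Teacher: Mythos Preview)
Your argument is correct and matches the paper's intent: the corollary is stated immediately after the lemma with no separate proof, so it is meant to follow exactly as you describe, by letting $a\to-\infty$ in \eqref{antiHermite} for odd $n$ and using Gaussian decay to kill the lower boundary terms.
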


\begin{corollary}
   For even $n$ because of (\ref{HermiteNumbers}) we have
   \begin{equation}\label{antiHermiteEven}
       \int_{0}^{x}e^{-\frac{t^2}{2}}H_{n}(t)dt = -2\sum_{i=0}^{\floor*{\frac{n-1}{2}}} \frac{2^i (n-1)!!}{(n-1-2i)!!}e^{-\frac{x^2}{2}}H_{n-1-2i}(x)+2^{\frac{n}{2}}(n-1)!!\int_a^x e^{-\frac{t^2}{2}}dt.
   \end{equation}
\end{corollary}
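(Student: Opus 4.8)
The plan is to deduce this directly from the lemma's formula \eqref{antiHermite} by specializing $a=0$ and killing the lower boundary terms. First I would set $a=0$ in \eqref{antiHermite}, so that
\[
\int_{0}^{x}e^{-\frac{t^2}{2}}H_{n}(t)dt
= -2\sum_{i=0}^{\floor*{\frac{n-1}{2}}}\frac{2^i (n-1)!!}{(n-1-2i)!!}
\Bigl(e^{-\frac{x^2}{2}}H_{n-1-2i}(x) - H_{n-1-2i}(0)\Bigr)
+ \mathbb{I}_{\text{n is even}}\,2^{\frac{n}{2}}(n-1)!!\int_0^x e^{-\frac{t^2}{2}}dt,
\]
where I used $e^{-0/2}=1$ at the lower endpoint.

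Next I would exploit the parity of the Hermite polynomials. Since $n$ is even, for every index $i$ in the range $0\le i\le \floor*{\frac{n-1}{2}}$ the integer $n-1-2i$ is odd; and $H_m$ is an odd function when $m$ is odd, so $H_{n-1-2i}(0)=0$. This is exactly what the values of Hermite polynomials at the origin recorded in (\ref{HermiteNumbers}) give. Consequently every lower-boundary contribution in the displayed sum vanishes, and since $n$ is even the indicator $\mathbb{I}_{\text{n is even}}$ equals $1$, so the final integral term survives with $a$ replaced by $0$. Assembling these observations yields precisely \eqref{antiHermiteEven}.

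There is essentially no obstacle here: the statement is a one-line specialization of \eqref{antiHermite}, the only input being the vanishing of odd-index Hermite polynomials at $0$, which is the content of the cited identity (\ref{HermiteNumbers}). I would only take care to state explicitly that the lower limit in the last integral should read $\int_0^x$ rather than $\int_a^x$, matching the choice $a=0$.
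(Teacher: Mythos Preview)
Your proposal is correct and matches the paper's intended argument exactly: the corollary is simply the specialization $a=0$ of \eqref{antiHermite}, with the lower-endpoint terms $H_{n-1-2i}(0)$ vanishing by \eqref{HermiteNumbers} since $n-1-2i$ is odd for even $n$. Your observation that the surviving integral should read $\int_0^x$ rather than $\int_a^x$ is also right; this is a typo in the stated formula.
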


As $H_n(x)$ is an odd function for odd $n$, we have:
\begin{gather*}
    \frac{1}{2}\int_{\mathbb{R}} \sign(x-t)e^{-\frac{t^2}{2}}H_n(t)dt = \frac{1}{2}\left[\int_{-\infty}^{x} e^{-\frac{t^2}{2}}H_n(t)dt - \int_{x}^{+\infty} e^{-\frac{t^2}{2}}H_n(t)dt \right]= \\
    = \frac{1}{2}\left[\int_{-\infty}^{x} e^{-\frac{t^2}{2}}H_n(t)dt + \int_{-\infty}^{-x} e^{-\frac{t^2}{2}}H_n(t)dt \right] = \int_{-\infty}^{x} e^{-\frac{t^2}{2}}H_n(t)dt.
\end{gather*}

Analogously, $H_n(x)$ is an even function for even $n$, thus
\begin{multline*}
    \frac{1}{2}\int_{\mathbb{R}} \sign(x-t)e^{-\frac{t^2}{2}}H_n(t)dt = \frac{1}{2}\left[\int_{-\infty}^{x} e^{-\frac{t^2}{2}}H_n(t)dt - \int_{x}^{+\infty} e^{-\frac{t^2}{2}}H_n(t)dt \right]= \\
    =\frac{1}{2}\left[\int_{-\infty}^{x} e^{-\frac{t^2}{2}}H_n(t)dt - \int_{-\infty}^{-x} e^{-\frac{t^2}{2}}H_n(t)dt \right] = \int_{0}^{x} e^{-\frac{t^2}{2}}H_n(t)dt.
\end{multline*}

Therefore, according to (\ref{antiHermiteOdd}) and (\ref{antiHermiteEven}), we have
\begin{align}\label{middleSimple}
    \frac{1}{2}\int_{\mathbb{R}} \sign(x-t)e^{-\frac{t^2}{2}}H_n(t)dt ={}& -2\sum_{i=0}^{\floor*{\frac{n-1}{2}}} \frac{2^i (n-1)!!}{(n-1-2i)!}e^{-\frac{x^2}{2}}H_{n-1-2i}(x) + \\
    \notag &{}+ \mathbb{I}_{\text{$n$ is even}}2^{\frac{n}{2}}(n-1)!!\int_a^x e^{-\frac{t^2}{2}}dt.
\end{align}

Now we can use (\ref{middleSimple}) and Corollary \ref{twoHermites} to calculate the next integral that is a part of the formula (\ref{GUEdensity}) and, therefore, (\ref{GSEdensity}) and (\ref{GOEdensity}):
\begin{multline}\label{supportCalc}
    \sqrt{\frac{n}{2}}\int_{\mathbb{R}}e^{sx}\phi_{n-1}(x)\frac{1}{2}\int_{\mathbb{R}} \sign(x-t)\phi_{n}(t)dtdx = \\
    =\frac{1}{\sqrt{\pi}2^n(n-1)!}\int_{\mathbb{R}}e^{sx-\frac{x^2}{2}}H_{n-1}(x)\frac{1}{2}\int_{\mathbb{R}} \sign(x-t)e^{-\frac{t^2}{2}}H_{n}(t)dtdx = \\
    \shoveright{=-e^{\frac{s^2}{4}}\sum_{i=0}^{\floor*{\frac{n-1}{2}}}\frac{s^{2i}(n-1)!!}{2^i(2i)!(n-1-2i)!!}{_1F_1}\left(-(n-1-2i);1+2i;-\frac{s^2}{2}\right) +} \\
    {}+ \mathbb{I}_{\text{n is even}}\frac{2^{\frac{n}{2}}(n-1)!!}{\sqrt{\pi}2^n (n-1)!}\int_{\mathbb{R}}e^{s x-\frac{x^2}{2}}H_{n-1}(x)\int_0^x e^{-\frac{t^2}{2}}dtdx.
\end{multline}
\subsection{GUE case}
For reader's convenience, we reproduce here the proof for the GUE case from \cite{HT}.

For simplicity of computations, let us take $\sigma^2 = 1/2$. Then the GUE density (\ref{GUEdensity}) equals
\begin{equation*}
    p_{GUE,n,1/2}(x) = \sum_{k=0}^{n-1}\phi_k(x)^2.
\end{equation*}
Using (\ref{recurrent}) and Corollary \ref{twoHermites}, we obtain the Laplace transform of the GUE density:
\begin{multline*}
    \int_{\mathbb{R}} e^{s x}\left(\sum_{i=0}^{n-1}\phi_i(x)^2\right)dx = \frac{\sqrt{2n}}{s}\int_{\mathbb{R}} e^{s x}\phi_n(x)\phi_{n-1}(x)dx =\\
    = \frac{1}{2^{n-1}(n-1)!\sqrt{\pi}s}\int_{\mathbb{R}} e^{s x-x^2}H_n(x)H_{n-1}(x)dx = n e^{\frac{s^2}{4}}{_1F_1}\left(1-n,2,-\frac{s^2}{2}\right)
\end{multline*}
for $s \neq 0$. For $s = 0$ it follows from the orthogonality relations (\ref{orthRel}).

The general case (\ref{GUEgen}) is obtained by a change of variables $s\mapsto\sqrt{2\sigma}s$.

\subsection{GSE case}
Again, let us take $\sigma^2 = 1/2$, so the GSE density (\ref{GSEdensity}) equals
\begin{equation*}
    2p_{GSE,n}(x) = p_{GUE,2n+1}(x)+\sqrt{\frac{2n+1}{2}}\phi_{2n}(x)\frac{1}{2}\int_{\mathbb{R}} \sign(x-t)\phi_{2n+1}(t)dt.
\end{equation*}

We already know the Laplace transform of the first term. For the second term we use  (\ref{supportCalc}) with $2n+1$ in place of $n$, so this term equals
\begin{multline*}
    -e^{\frac{s^2}{4}}\sum_{i=0}^{n}\frac{s^{2i}(2n)!!}{2^i(2i)!(2n-2i)!!}{_1F_1}\left(-(2n-2i);1+2i;-\frac{s^2}{2}\right) ={}\\
    = -e^{\frac{s^2}{4}}\sum_{i=0}^{n}\frac{s^{2i}n!}{(2i)!(n-i)!}{_1F_1}\left(-(2n-2i);1+2i;-\frac{s^2}{2}\right),
\end{multline*}
and (\ref{GSEgen}) is proven for $\sigma^2=1/2$. As usual, the general case follows by change of variable $s \mapsto \sqrt{2}\sigma s$.

\subsection{GOE case}
For $\sigma^2 = 1/2$ the density  (\ref{GOEdensity}) equals
\begin{equation}\label{GOEdensitySimplifiedOdd}
    p_{GOE,n,\sigma^2}(x) = 2p_{GSE, \frac{n-1}{2}}(x)+\mathbb{I}_{\text{n is odd}}\phi_{n-1}(x)\left(\int_{\mathbb{R}}\phi_{n-1}(t)dt)\right)^{-1}.
\end{equation}
Since $e^{-\frac{x^2}{2}} H_n(x)$ is an eigenfunction of Fourier transform with the eigenvalue $(-i)^n$, we readily obtain the Laplace transform (that is, integrating with the weight $e^{s x}$) of the last term in (\ref{GOEdensitySimplifiedOdd}) (cf. (\ref{HermiteHyper}))
\begin{equation*}
    \frac{\int_{\mathbb{R}}e^{sx}\phi_{n-1}(x)dx}{\int_{\mathbb{R}}\phi_{n-1}(t)dt} = \frac{(-i)^n \sqrt{2 \pi}\phi_{n-1}(i s)}{(-i)^n \sqrt{2 \pi}\phi_{n-1}(0)} = e^{\frac{s^2}{2}}{_1F_1}\left(-\frac{n-1}{2};\frac{1}{2};-s^2\right).
\end{equation*}
The Laplace transform of $2p_{GSE, \frac{n-1}{2}}$ is obtained by (\ref{supportCalc}), and it equals
\[-e^{\frac{s^2}{4}}\sum_{i=0}^{\floor*{\frac{n-1}{2}}}\frac{s^{2i}(n-1)!!}{2^i(2i)!(n-1-2i)!!}{_1F_1}\left(-(n-1-2i);1+2i;-\frac{s^2}{2}\right) + \]
\[+ \mathbb{I}_{\text{n is even}}\frac{2^{\frac{n}{2}}(n-1)!!}{\sqrt{\pi}2^n (n-1)!}\int_{\mathbb{R}}e^{s x-\frac{x^2}{2}}H_{n-1}(x)\int_0^x e^{-\frac{t^2}{2}}dtdx.\]
Denote
\begin{equation*}
   \mathcal{A}(x) = \sum_{n=0}^{\infty}\frac{A_n(s)}{n!}x^n, \quad A_n(s) = \int_{\mathbb{R}}e^{s x-\frac{x^2}{2}}H_{n}(x)\int_0^x e^{-\frac{t^2}{2}}dtdx.
\end{equation*}
By recurrence relations (\ref{recurrent}), (\ref{OriginalTwoHermites}) and integration by parts, we get
\begin{multline*}
   A_{n+1}(s) = \int_{\mathbb{R}}e^{s x-\frac{x^2}{2}}H_{n+1}(x)\int_0^x e^{-\frac{t^2}{2}}dtdx = \\
   = 2\int_{\mathbb{R}}xe^{s x-\frac{x^2}{2}}H_{n}(x)\int_0^x e^{-\frac{t^2}{2}}dtdx + \int_{\mathbb{R}}e^{s x-\frac{x^2}{2}}H_{n}'(x)\int_0^x e^{-\frac{t^2}{2}}dtdx = \\
   = 2sA_n(s) + 2nA_{n-1}(s) + \sqrt{\pi}e^{\frac{s^2}{4}}s^n,
\end{multline*}
hence
\begin{equation*}
   \mathcal{A}'(x) = (2s+2x)\mathcal{A}(x) + \sqrt{\pi}e^{\frac{s^2}{4}}e^{sx}.
\end{equation*}
Solving this ordinary differential equation of the first order, we get
\begin{equation*}
   \mathcal{A}(x) = e^{2sx+x^2}\left(C(s) + \sqrt{\pi}e^{\frac{s^2}{4}}\int_0^x e^{-st-t^2}dt\right).
\end{equation*}
Further, 
\begin{equation*}
   C(s) = \mathcal{A}(0) = A_0(s) = \int_{-\infty}^{\infty}e^{s x-\frac{x^2}{2}}\int_0^x e^{-\frac{t^2}{2}}dtdx = \sqrt{2 \pi} e^{\frac{s^2}{2}} \cdot \int_0^{\frac{s}{\sqrt{2}}} e^{-\frac{t^2}{2}}dt.
\end{equation*}
Finally, using the generating function of the Hermite polynomials (\ref{HermiteGenerating}), relation (\ref{HermiteHyper}) and general Leibniz rule, we obtain
\begin{multline}
   A_{n-1}(s) = \mathcal{A}^{(n-1)}(0) = 2s\frac{(n-1)!}{\left(\frac{n-2}{2}\right)!}{_1F_1}\left(-\frac{n-2}{2}, \frac{3}{2}, -s^2\right)e^{\frac{s^2}{2}}\sqrt{2\pi}\int_0^{\frac{s}{\sqrt{2}}} e^{-\frac{t^2}{2}}dt+\\
   +\sqrt{\pi}e^{\frac{s^2}{4}}\sum_{j=1}^{n-1}\binom{n-1}{j}(-i)^{n-1-j}H_{n-1-j}(i s)H_{j-1}\left(-\frac{s}{2}\right).
\end{multline}
Now, collecting all terms, we have proven (\ref{GOEgen}) in the case $\sigma^2 = 1/2$. The general case follows by change of variables $s \to \sigma\sqrt{2}s$.
\section{Asymptotic expansion of the moment generating functions}\label{SectionMomentExpansion}
We give expansions only for GUE and GSE cases. It is possible to get the expansion for the GOE case, but expressions are too cumbersome to be included. 
\subsection{GUE case}
We can start with (\ref{GUEgen}):
\begin{equation}\label{GUEgenNormalized}
    \mathcal{L}(p_{GUE,n,\sigma^2}) = n\cdot e^{\frac{s^2}{2n}}{_1F_1}\left(1-n;2;-\frac{s^2}{n}\right)
\end{equation}
As it was shown in \cite{K}, this expression has the following expansion:
\begin{equation*}
n\cdot e^{\frac{s^2}{2n}}{_1F_1}\left(1-n;2;-\frac{s^2}{n}\right) = e^{\frac{s^2}{2n}}\sum_{i=0}^{n-1}\frac{n(n-1)\dots(n-i)}{i!(i+1)!}\frac{s^{2i}}{n^i}.
\end{equation*}
Let us recall the definition of the Stirling numbers of the first kind $\stirlingone{n}{k}$,
\begin{align*}
    x(x+1)\dots(x+n+1) &{}= \sum_{k=0}^n \stirlingone{n}{n-k}x^{n-k},\\
    x(x-1)\dots(x-n+1) &{}= \sum_{k=0}^n (-1)^k\stirlingone{n}{n-k}x^{n-k}.
\end{align*}
Then, changing the order of summation, we have
\begin{multline*}
    n\cdot e^{\frac{s^2}{2n}}{_1F_1}\left(1-n;2;-\frac{s^2}{n}\right) = e^{\frac{s^2}{2n}}\sum_{i=0}^{n-1}\frac{\sum_{j=0}^{i+1} (-1)^j\stirlingone{i+1}{i+1-j}n^{i+1-j}}{i!(i+1)!}\frac{s^{2i}}{n^i}=\\
    =e^{\frac{s^2}{2n}}\sum_{j=0}^n \left(\sum_{i=j-1}^{n-1} \frac{(-1)^j\stirlingone{i+1}{i+1-j}s^{2i}}{i!(i+1)!} \right)\frac{1}{n^{j-1}},
\end{multline*}
where $\stirlingone{0}{0} = 0$. So we can multiply the expansion of the exponent and the polynomial to get $1/n$-expansion of the Laplace transform of the GUE density.

\subsection{GSE case}
Lets us rewrite (\ref{GSEgen}) with $\sigma^2 = 1/2$:
\begin{equation}\label{GSEgenNormalized}
    \mathcal{L}(p_{GSE,n}) = \mathcal{L}(p_{GUE, n})-e^{\frac{s^2}{2n}}\sum_{i=0}^{n}\frac{s^{2i}2^i n!}{n^i(2i)!(n-i)!}{_1F_1}\left(-(2n-2i);1+2i;-\frac{s^2}{n}\right).
\end{equation}

The $1/n$-expansion of the first term is obtained in the previous subsection. Now let us calculate $1/n$-expansion of the second term:

\begin{multline*}
    e^{\frac{s^2}{2n}}\sum_{i=0}^{n}\frac{s^{2i}2^i n!}{n^i(2i)!(n-i)!}{_1F_1}\left(-(2n-2i);1+2i;-\frac{s^2}{n}\right) = \\
    =e^{\frac{s^2}{2n}}\sum_{i=0}^{n}\sum_{j=0}^{2n-2i}\frac{2^in(n-1)\dots(n-i+1)(2n-2i)\dots(2n-2i+j-1)s^{2i+2j}}{(2i+j)!j!n^{i+j}}.
\end{multline*}

We can write the product as a polynomial on $n$:
\begin{equation*}
    \prod_{k=0}^{i-1}(n-k)\prod_{k=0}^{j-1}(2n-2i-k) = \sum_{k=0}^{i+j}b_k^{(i,j)}n^{k} = \sum_{k=0}^{i+j}b_{i+j-k}^{(i,j)}n^{i+j-k}.
\end{equation*}

Then we can write
\begin{multline*}
    e^{\frac{s^2}{2n}}\sum_{i=0}^{n}\sum_{j=0}^{2n-2i}\frac{2^in(n-1)\dots(n-i+1)(2n-2i)\dots(2n-2i+j-1)s^{2i+2j}}{(2i+j)!j!n^{i+j}} = \\
    = e^{\frac{s^2}{2n}}\sum_{i=0}^{n}\sum_{j=0}^{2n-2i}\frac{2^i\sum_{k=0}^{i+j}b_{i+j-k}^{(i,j)}n^{i+j-k}s^{2i+2j}}{(2i+j)!j!n^{i+j}}.
\end{multline*}

Changing the order of summation, we get
\begin{equation*}
    e^{\frac{s^2}{2n}}\sum_{i=0}^{n}\frac{s^{2i}2^i n!}{n^i(2i)!(n-i)!}{_1F_1}\left(-(2n-2i);1+2i;-\frac{s^2}{n}\right) = e^{\frac{s^2}{2n}}\sum_{k=0}^{2n}\left(\sum_{i=0}^n \sum_{j=0}^{2n-2i}\frac{2^i b_{i+j-k}^{(i,j)}s^{2i+2j}}{(2i+j)!j!}\right)\frac{1}{n^k}.
\end{equation*}

\appendix

\section{Some properties of Hermite polynomials}

As above, \emph{Hermite functions} $\phi_k(x)$ and \emph{Hermite polynomials} $H_k(x)$ are defined as \begin{equation*}
    \phi_k(x) := \frac{1}{\sqrt{2^k k! \sqrt{\pi}}}H_k(x)e^{-\frac{x^2}{2}},\quad 
    H_k(x) := (-1)^k e^{x^2} \times \left( \frac{d^k}{dx^k}e^{-x^2} \right).
\end{equation*}
The polynomials $H_k$ are orthogonal in $L^2(\mathbb{R},e^{-x^2}\,dx)$, while
$\phi_k$ are orthonormal in $L^2(\mathbb{R},dx)$:
\begin{equation}\label{orthRel}
\begin{aligned}
    \int_{\mathbb{R}} H_k(x)H_l(x)e^{-x^2}dx &=
    \begin{cases}
        \sqrt{\pi}2^k k!,& k = l,\\
        0,              & k \neq l,
    \end{cases}\\
    \int_{\mathbb{R}} \phi_k(x)\phi_l(x)dx &=
    \begin{cases}
        1,& k = l,\\
        0,& k \neq l.\\
\end{cases}
\end{aligned}
\end{equation}
Clearly, $\phi_k(x)$ and $H_k(x)$ are odd functions when $k$ is odd, and even functions when $k$ is even. Also,
\begin{equation}\label{recurrent}
    H'_{n}(x)= 2xH_n(x) - H_{n+1}(x) = 2nH_{n-1}(x),
\end{equation}
thus
\begin{equation*}
    \phi_n'(x) = \sqrt{\frac{n}{2}}\phi_{n-1}(x) - \sqrt{\frac{n+1}{2}}\phi_{n+1}(x)
\end{equation*}
and
\begin{equation}\label{DerivSumSq}
    \frac{d}{dx}\left(\sum_{i=0}^{n-1}\phi_i(x)^2\right) = -\sqrt{2n}\phi_n(x)\phi_{n-1}(x).
\end{equation}
Values of Hermite polynomials at zero are called \emph{Hermite numbers}:
\begin{equation}\label{HermiteNumbers}
    H_n(0) =
    \begin{cases}
        0,& \mbox{if } n \mbox{ is odd}\\
        (-1)^{n/2} 2^{n/2} (n-1)!!, & \mbox{if } n \mbox{ is even}
    \end{cases}
\end{equation}
Hermite polynomials are represented by hypergeometric functions as
\begin{equation}\label{HermiteHyper}
\begin{aligned}
    H_{2n}(x) &= (-1)^{n}\,\frac{(2n)!}{n!} \,_1F_1\left(-n,\frac{1}{2};x^2\right),\\
   H_{2n+1}(x) &= (-1)^{n}\,\frac{(2n+1)!}{n!}\,2x \,_1F_1\left(-n,\frac{3}{2};x^2\right).
\end{aligned}
\end{equation}
The exponential generating function for Hermite polynomials is given by
\begin{equation}\label{HermiteGenerating}
    e^{2xt+t^2} = \sum_{n=0}^{\infty}\frac{H_n(x)}{n!}t^n,
\end{equation}
hence we immediately have 
\begin{equation}\label{HermiteShift}
    H_n(x + a) = \sum_{i=0}^{n}\binom{n}{i}H_i(x)(2a)^{n-i}.
\end{equation}

\section*{Acknowledgements}
I am deeply grateful to A.V.~Klimenko for helpful suggestions on
improving the presentation. I am deeply grateful to A.M.~Sodin for fruitful discussions. I am deeply grateful to A.I.~Bufetov for the proposed topic for work and his advice. During the work on this paper, I was supported by RFBR grant 16-31-00173, by the  Russian Academic Excellence Project '5-100' and Dobrushin Scholarship.

\bibliographystyle{alpha}

\end{document}